\newtheorem{lemma}{Lemma}[section]
\newtheorem{theorem}{Theorem}[section]
\newtheorem{proposition}{Proposition}[section]
\newtheorem{remark}{Remark}[section]
\newtheorem{corollary}{Corollary}[section]
\numberwithin{equation}{section}
\newcommand{\dis}{\displaystyle}
\newcommand{\rmi}{{\rm i}}
\newcommand{\rmre}{{\rm Re}}
\newcommand{\R}{\mathbb{R}}
\newcommand{\Z}{\mathbb{Z}}
\newcommand{\T}{\mathbb{T}}
\newcommand{\CA}{\mathcal{A}}
\newcommand{\CD}{\mathcal{D}}
\newcommand{\CE}{\mathcal{E}}
\newcommand{\CH}{\mathcal{H}}
\newcommand{\CM}{\mathcal{M}}
\newcommand{\SB}{\text{\rm \textsf{B}}}
\newcommand{\SX}{\text{\rm \textsf{X}}}
\newcommand{\SY}{\text{\rm \textsf{Y}}}
\newcommand{\ST}{\text{\rm \textsf{T}}}
\newcommand{\SL}{\text{\rm \textsf{L}}}
\newcommand{\SI}{\text{\rm \textsf{I}}}
\newcommand{\SP}{\text{\rm \textsf{P}}}
\newcommand{\SQ}{\text{\rm \textsf{Q}}}
\newcommand{\na}{\nabla}
\newcommand{\al}{\alpha}
\newcommand{\om}{\omega}
\newcommand{\la}{\lambda}
\newcommand{\de}{\delta}
\newcommand{\si}{\sigma}
\newcommand{\pa}{\partial}
\newcommand{\eps}{\epsilon}
\newcommand{\De}{\Delta}
\newcommand{\Ga}{\Gamma}
\begin{document}

\title{\bf Hypocoercivity of Linear Degenerately Dissipative Kinetic Equations}
\author{\textsc{Renjun Duan} \\[1mm]
Johann Radon Institute for Computational and Applied Mathematics\\
Austrian Academy of Sciences\\
Altenbergerstrasse 69, A-4040 Linz, Austria\\
{\it E-mail: mathrjduan@hotmail.com}}
\date{December 3, 2009}
\maketitle

\begin{abstract}
In this paper, we study the hypocoercivity for a class of linear
kinetic equations with both transport and degenerately dissipative
terms. As concrete examples, the relaxation operator, Fokker-Planck
operator and linearized Boltzmann operator are considered. By
constructing equivalent temporal energy functionals, time rates of
the solution approaching equilibrium in some Hilbert spaces are
obtained when the spatial domain takes the whole space or torus and
when there is a confining force or not. The main tool of the proof
is the macro-micro decomposition combined with Kawashima's argument
on dissipation of the hyperbolic-parabolic system. Finally, a
Korn-type inequality with probability measure is provided to deal
with dissipation of momentum components.
\end{abstract}

{\small {\it Keywords:} {Hypocoercivity; semigroup; kinetic
equation; dissipation}



\tableofcontents


\section{Introduction}

\noindent{\it 1.1 Problems}. In this paper, we consider the linear
kinetic equation with both transport and dissipative terms in the
form of
\begin{equation}\label{s1.eq}
    \pa_t u+ \ST u= \SL u.
\end{equation}
Here, the unknown is $u=u(t,x,\xi)$ with $t> 0$, $x\in \Omega=\R^d$
or $\T^d$, $\xi\in\R^d$. $d\geq 1$ denotes the space dimension.
$\ST=\xi\cdot \na_x-\na_x V \cdot \na_\xi$ is a transport operator.
$V=V(x)$ is a given stationary external forcing. $\SL: L^2_\xi\to
L^2_\xi$ is a linear, local, self-adjoint and non-positive operator
with  ${\rm ker}\,\SL \neq \{0\}$. Moreover,  $\SL$ is degenerately
dissipative in the sense that there is a constant $\la_{\SL}>0$ such
that
\begin{equation}\label{s1.coerc}
    \int_{\R^d} u \SL u\, d\xi\leq -\la_\SL \|\{\SI-\SP\} u\|_{\CH_\xi}^2
\end{equation}
where $\CH_\xi\subset L^2_\xi$ is a Hilbert space with norm $\|\cdot
\|_{\CH_\xi}$, $\SI$ is an identity operator and $\SP$ is an
orthogonal velocity projection operator from $L^2_\xi$ to ${\rm
ker}\,\SL$. Given initial data $u(0,x,\xi)=u_0(x,\xi)$, the solution
of \eqref{s1.eq} formally takes the form of
\begin{equation*}
    u(t)=e^{t\SB}u_0,\ \ \SB=:\SL-\ST.
\end{equation*}
The goal of this paper is to study time-decay rates of $e^{t\SB}u_0$
in some Hilbert space under some conditions on $V$ and $u_0$ as time
tends to infinity.

\bigskip

\noindent{\it 1.2 Literature}. The time rate of convergence of
solutions to equilibrium is an important issue in the study of
evolution equations. For the linear kinetic equation in the form of
\eqref{s1.eq}, the main difficulty lies in the fact that the
linearized operator is degenerate in a nontrivial finite dimensional
space since conservation laws exist for general physical models.
However, the interaction between the transport part and the
degenerate dissipative part can lead to convergence to equilibrium.
This property is called {\it hypocoercivity}  \cite{Vi}.

There have been several well-established methods to study the rate
of convergence for the kinetic models such as the relaxation
equation, Fokker-Planck equation, Boltzmann equation and Landau
equation. Since the literature on this subject is so huge, we only
mention some of them which are related to the study of this paper.
The non-constructive method by spectral analysis to obtain the
exponential rates for the Boltzmann equation with hard potentials on
torus was firstly provided by Ukai \cite{Ukai-1974}. The recent
refinement of results of \cite{Ukai-1974} can be found in
\cite{UY-AA}. Energy method of the Boltzmann equation was developed
by Liu-Yu \cite{Liu-Yu-Shock}, Liu-Yang-Yu \cite{LYY} and Guo
\cite{Guo-CMP,Guo-IUMJ}. Energy method combined with additional
techniques such as velocity weight estimates or spectral analysis
\cite{SG0,SG, DUYZ} was also applied to obtain time decay rates in
the framework of perturbations. Another powerful tool is entropy
method which works in the non-perturbation framework. By using this
method, Desvillettes-Villani \cite{DV} obtained firstly the almost
exponential rate of convergence of solutions to the Boltzmann
equation on torus with soft potentials for large initial data under
the additional regularity conditions. Concerning the Fokker-Planck
equation, see \cite{CCG} and references therein. In addition, on the
basis of the spectral analysis, the hypoelliptic theory  of the
Fokker-Planck equation or relaxation Boltzmann equation  was
developed by H\'{e}rau-Nier \cite{HN} and H\'{e}rau
\cite{H-AA,H-JFA}.

Recently, some general theory on hypocoercivity was provided in
\cite{MN,DMS,Vi}. By constructing some proper Lyapunov functional
defined over the Hilbert space, Mouhot-Neumann \cite{MN} obtained
the exponential rates of convergence in $H^1$-norm for some kinetic
models with general structures in the case of torus. An extension of
\cite{MN} to models in the presence of a confining potential force
was given by Dolbeault-Mouhot-Schmeiser \cite{DMS}, where $L^2$-norm
was considered. Villani \cite{Vi} also gave a systematic study of
the Hypocoercivity theory. The result on the Fokker-Planck in this
paper is inspired by \cite{Vi}.

\bigskip

\noindent{\it 1.3 Notations}.  Define differential operators $\SX_i,
\SY_i$ $(1\leq i\leq d)$ by
\begin{equation}\label{def.op.xy}
    \SX_i u =e^{-\frac{V(x)}{2}}\pa_{x_i} (e^{\frac{V(x)}{2}}
  u),\ \  \SY_i u = e^{-\frac{|\xi|^2}{4}}\pa_{\xi_i} (e^{\frac{|\xi|^2}{4}}
  u).
\end{equation}
Note that $\SX_i,\SY_i $ are equivalent with
\begin{equation*}
 \SX_i  = \frac{1}{2}\pa_{x_i} V +\pa_{x_i} , \ \  \SY_i u = \frac{1}{2}\xi_i  +\pa_{\xi_i}.
\end{equation*}
Define a norm $\|\cdot \|_{\CH^1}$ by
\begin{equation*}
    \|u\|_{\CH^1}^2=\|u\|^2+\|\SX u\|^2+\|\SY u\|^2
\end{equation*}
for $u=u(x,\xi)$. Here and in the sequel, $\|\cdot \|$ means
$L^2$-norm over $\R^d_x\times \R^d_\xi$,
$\SX=(\SX_1,\SX_2,\cdots,\SX_d)$, $\SY=(\SY_1,\SY_2,\cdots,\SY_d)$,
and
\begin{equation*}
\|\SX u\|^2=\sum_{i=1}^d\|\SX_i u\|^2,\ \  \|\SY
u\|^2=\sum_{i=1}^d\|\SY_i u\|^2.
\end{equation*}
For simplicity, when a function under consideration is independent
of velocity variable, $\|\cdot \|$ is also used to denote $L^2$-norm
over $\R^d_x$ and $\pa_i=\pa_{x_i}$ without any confusion. For the
inner products over $L^2_x$ and $L^2_\xi$, we use
\begin{equation*}
   (g,h)=\int_{\Omega} f(x)g(x)dx,\ \  \langle g,h\rangle=\int_{\R^d}
   g(\xi)h(\xi)d\xi,
\end{equation*}
respectively. For a function $w=w(\xi)$, define norms $|\cdot|_w$
and $\|\cdot\|_w$ by
\begin{eqnarray*}
  |g|_{w}^2&=&\int_{\R^d}|\na_\xi g(\xi)|^2 +
  w(\xi)|g(\xi)|^2d\xi,\ \ g=g(\xi),\\
     \|g\|_w^2& =&\iint_{\Omega\times\R^d}|\na_\xi g(x,\xi)|^2 +
   w(\xi)|g(x,\xi)|^2dxd\xi,\ g=g(x,\xi).
\end{eqnarray*}
For $q\geq 1$ and $\Omega=\R^d$, define
\begin{equation*}
    Z_q=L^2_\xi(L^q_x)=L^2(\R^d_\xi;L^q(\R^d_x)),\ \ \|g\|_{Z_q}=\left(\int_{\R^d}\left(\int_{\R^d}
    |g(x,\xi)|^qdx\right)^{2/q}d\xi\right)^{1/2}.
\end{equation*}
Denote two functions $M$, $\CM$ by
\begin{equation*}
     M(\xi)=(2\pi)^{-d/2}e^{-|\xi|^2/2},\ \
     \CM(x,\xi)=e^{-V(x)}M(\xi),
\end{equation*}
where $M$ is a normalized Maxwellian. $C$  denotes some positive
(generally large) constant and $\la$ denotes some positive
(generally small) constant, where both $C$ and $\la$ may take
different values in different places. $A\sim B$ means $\la_1 A\leq B
\leq \la_2 A$ for two generic constants $\la_1>0$ and $\la_2>0$. For
an integrable function $g: \Omega\to\R$, its Fourier transform
$\hat{g}$ is defined by
\begin{equation*}
   \hat{g}(k)= \int_{\Omega} e^{-\rmi x\cdot k} g(x)dx, \ \ x\cdot
    k=:\sum_{j=1}^dx_jk_j,
\end{equation*}
for $k\in\R^d$ if $\Omega=\R^d$ while $k\in \Z^d$ when
$\Omega=\T^d$, where $\rmi =\sqrt{-1}\in \mathbb{C}$ is the
imaginary unit. For two complex numbers $c_1,c_2\in\mathbb{C}$,
$(c_1\mid c_2)=c_1\cdot \overline{c}_2$ denotes the dot product over
the complex field, where $\overline{c}_2$ is the complex conjugate
of $c_2$.

\bigskip

\noindent{\it 1.4 Models}. To the end, we shall consider three types
of degenerately dissipative operators for $\SL$ as follows.

\bigskip

\noindent\underline{Model 1.} $\SL$ is the linear relaxation
operator
\begin{equation}\label{def.lm1}
    \SL u=-\{\SI-\SP_0\}u,
\end{equation}
where $\SP_0: L^2_\xi\to {\rm span}\{M^{1/2}\}$ is an orthogonal
velocity projection operator given by
\begin{equation}\label{def.p0}
    \SP_0 u=a  M^{1/2},\ \ a=a^u=:\langle M^{1/2},
    u\rangle.
\end{equation}
Notice that
\begin{equation*}
{\rm Ker}\, \SL={\rm span}\{M^{1/2}\}
\end{equation*}
and $-\SL$ satisfies the identity
\begin{equation*}
    -\int_{\R^d} u\SL u d\xi=\int_{\R^d}|\{\SI-\SP_0\} u|^2d\xi.
\end{equation*}
Moreover, if
\begin{equation*}
    f=\CM+\CM^{1/2}u,
\end{equation*}
then $f$ equivalently satisfies the original relaxation model:
\begin{equation*}
    \pa_t f+\ST f=M\int_{\R^d}f d\xi-f.
\end{equation*}

\bigskip

\noindent\underline{Model 2.} $\SL$ is the linear self-adjoint
Fokker-Planck operator
\begin{equation}\label{def.lm2}
    \SL u=\frac{1}{M^{1/2}}\na_\xi\cdot \left(M \na_\xi
    \left(\frac{u}{M^{1/2}}\right)\right).
\end{equation}
It is well-known that
\begin{equation*}
{\rm Ker}\, \SL={\rm span}\{M^{1/2}\}
\end{equation*}
and $-\SL$ satisfies the coercivity
\begin{equation}\label{m2.coerc}
    -\int_{\R^d} u\SL u \,d\xi\geq |\{\SI-\SP_0\} u|^2_\nu,
\end{equation}
where $\nu=\nu(\xi)=1+|\xi|^2$ and $\SP_0$ is still defined by
\eqref{def.p0}. Instead of \eqref{m2.coerc}, it is more convenient
to use another equivalent coercivity inequality as used in
\cite{DFT-09}. In fact, define $\SP: L^2_\xi\to {\rm
span}\{M^{1/2},\xi M^{1/2}\}$ as
\begin{equation*}
\left\{\begin{array}{l}
\dis \SP u=\SP_0 u\oplus \SP_1 u,\\[3mm]
\dis \SP_0 u= a M^{1/2},\ \ a=a^u=\langle M^{1/2}, u\rangle,\\[3mm]
\dis \SP_1 u=b\cdot \xi M^{1/2},\ \ b=b^u=\langle \xi M^{1/2},
u\rangle.
\end{array}\right.
\end{equation*}
Then, one can compute
\begin{eqnarray*}
  \SL u &=& \SL\{\SI-\SP\} u+\SL \SP u=\SL\{\SI-\SP\} u+\SL \SP_1 u=
  \SL\{\SI-\SP\} u-\SP_1 u.
\end{eqnarray*}
Thus, it holds that
\begin{equation*}
    -\int_{\R^d} u \SL u\, d\xi\geq \la |\{\SI-\SP\} u|_\nu^2 +|b|^2.
\end{equation*}
Similarly as before, if let
\begin{equation*}
    f= \CM+\CM^{1/2}u
\end{equation*}
then $f$ equivalently satisfies the linear Fokker-Planck equation:
\begin{equation*}
    \pa_t f+\ST f=\na_\xi \cdot (\na_\xi  f+\xi f).
\end{equation*}

\bigskip

\noindent\underline{Model 3.} $\SL$ is the linearized Boltzmann
operator
\begin{equation}\label{def.lm3}
    \SL u=\frac{1}{M^{1/2}} [\SQ(M,M^{1/2}u)+\SQ(M^{1/2}u,M)],
\end{equation}
where $\SQ$ is the so-called bilinear collision operator
 defined by
\begin{eqnarray*}
&\dis \SQ(f,g)=\iint_{\R^d\times S^{d-1}} |(\xi-\xi_\ast)\cdot
\omega| (f(\xi')g(\xi_\ast')-f(\xi)g(\xi_\ast)) d\omega d
\xi_\ast,\\
&\dis \left\{\begin{array}{rll}
              \dis \xi'&=\xi&-[(\xi-\xi_\ast)\cdot\om]\om\\
              \dis \xi_\ast'&=\xi_\ast&+[(\xi-\xi_\ast)\cdot \om]\om
      \end{array}\right.
 \ \ \om\in S^{d-1},
\end{eqnarray*}
for $f=f(\xi)$ and $g=g(\xi)$. Here, although the hard-sphere
collision kernel in $\SQ$ is supposed, all results of this paper
still hold in the case of hard potentials and Maxwell molecules. For
$\SL$, it is also well-known \cite{CIP-Book} that ${\rm dim}\,{\rm
ker}\, \SL=d+2$,
\begin{equation}\label{def.ker.BE}
    {\rm ker}\,\SL={\rm span}\{M^{1/2}, \xi_1M^{1/2},\cdots, \xi_d M^{1/2},
    |\xi|^2M^{1/2}\},
\end{equation}
and $-\SL$ satisfies the coercivity
\begin{equation*}
    -\int_{\R^d} u \SL u\, d\xi \geq \la \int_{\R^d} \nu(\xi)|\{\SI-\SP\}
    u|^2 d\xi,
\end{equation*}
where we still used $\nu(\xi)$ to denote the collision frequency
defined by
\begin{equation}\label{def.nu.m3}
    \nu(\xi)=\iint_{\R^d\times S^{d-1}}|(\xi-\xi_\ast)\cdot \omega|M
    d\omega d\xi_\ast,
\end{equation}
and also used $\SP:L^2_\xi \to {\rm ker}\,\SL$ to denote the
orthogonal velocity projection operator. For convenience,
corresponding to the $d+2$ dimensional space \eqref{def.ker.BE},
$\SP$ is written as
\begin{equation}\label{def.pm3}
\left\{\begin{array}{l}
\dis \SP u =\{a+b\cdot \xi+c(|\xi|^2-d)\}M^{1/2},\\[3mm]
\dis a=a^u=\langle M^{1/2}, u\rangle,\\[3mm]
\dis b=b^u=\langle \xi M^{1/2}, u\rangle,\\[3mm]
\dis c=c^u=\frac{1}{2d}\langle (|\xi|^2-d)M^{1/2}, u\rangle.
\end{array}\right.
\end{equation}
Therefore, $a,b,c$ mean mass, momentum and temperature components of
macroscopic part $\SP u$. If let $f=\CM+\CM^{1/2} u$ then $f$
satisfies the linear Boltzmann equation:
\begin{equation}\label{def.lm3-1}
    \pa_t f+\ST f = \SQ(M,f)+\SQ(f,M).
\end{equation}

\bigskip

\noindent{\it 1.5 Main results.} Let us state them in two cases
which will be proved in terms of different analytical tools.

\begin{theorem}[case of no force]\label{thm.nf}
Consider \eqref{s1.eq} where $\Omega=\R^d$ or $\T^d$, $d\geq 1$,
$V=0$, and $\SL$ is one of the linear relaxation operator, linear
Fokker-Planck operator and linearized Boltzmann operator as defined
in Model 1, Model 2 and Model 3, respectively. Let $e^{t\SB} u_0$
denote the corresponding solution for initial data $u_0=u_0(x,\xi)$.

\medskip

\noindent Case ($\Omega=\R^d$). Let $h=h(t,x,\xi)$ satisfy
\begin{equation}\label{thm.nf.ch}
    h(t,x,\xi)\perp {\rm ker}\,\SL,\ \ \forall\,t\geq 0,x\in \R^d.
\end{equation}
Let $1\leq q\leq 2$. Then, for any $\al$, $\al'$ with $\al'\leq \al$
and $m=|\al-\al'|$, there is a constant $C$ such that
\begin{equation}\label{thm.nf.1}
    \|\pa_x^\al e^{t\SB} u_0\|\leq C (1+t)^{-\si_{q,m}} (\|\pa_x^{\al'}u_0\|_{Z_q}+\|\pa_x^\al
    u_0\|)
\end{equation}
and
\begin{eqnarray}
&\dis \|\pa_x^\al \int_0^t e^{-(t-s)\SB} h(s)ds\|^2\nonumber \\
&\dis \leq C\int_0^t (1+t-s)^{-2\si_{q,m}} (\|w^{-1/2}\pa_x^{\al'}
h(s)\|_{Z_q}^2+\|w^{-1/2}\pa_x^{\al}h(s)\|^2)ds\label{thm.nf.2}
\end{eqnarray}
for any $t\geq 0$. Here, $w=w(\xi)$ is defined by
\begin{equation*}
    w=w(\xi)=\left\{\begin{array}{ll}
               1 & \ \ \text{for Model 1,}\\
               1+|\xi|^2  & \ \ \text{for Model 2,}\\
                1+|\xi| & \ \ \text{for Model 3,}\\
             \end{array}\right.
\end{equation*}
and the index $\si_{a,m}$ of the algebraic rate is defined by
\begin{equation*}
    \si_{q,m}=\frac{d}{2}\left(\frac{1}{q}-\frac{1}{2}\right)+\frac{m}{2}.
\end{equation*}

\medskip

\noindent Case ($\Omega=\T^d$). Suppose
\begin{equation}\label{thm.nf.cid}
    \iint_{\T^d\times\R^d} \psi (\xi)u_0(x,\xi)dxd\xi=0,\ \ \forall\,
    \psi\in {\rm ker}\,\SL.
\end{equation}
Then, there are constants $C$ and $\la>0$ such that
\begin{equation}\label{thm.nf.3}
    \|e^{t\SB} u_0\|\leq C e^{-\la t}\|u_0\|
\end{equation}
for any $t\geq 0$.

\end{theorem}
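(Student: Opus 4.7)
The natural approach is to perform the Fourier transform in $x$, reducing the problem to a family of $\xi$-only equations indexed by $k$:
\begin{equation*}
\pa_t \hat{u}(t,k,\xi)+\rmi(\xi\cdot k)\hat{u}(t,k,\xi)=\SL \hat{u}(t,k,\xi).
\end{equation*}
The goal is to construct, for each $k$, a Lyapunov functional $\CE(t,k)$ equivalent to $\|\hat u(t,k,\cdot)\|_{L^2_\xi}^2$ that satisfies a differential inequality of the form
\begin{equation*}
\pa_t \CE(t,k)+\la\frac{|k|^2}{1+|k|^2}\,\CE(t,k)\leq 0,
\end{equation*}
so that $\CE(t,k)\leq e^{-\la t|k|^2/(1+|k|^2)}\CE(0,k)$. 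This frequency-dependent exponential decay is the single engine that drives both the torus and whole-space conclusions.

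To build $\CE(t,k)$ I would use a macro--micro decomposition $\hat u=\SP\hat u\oplus\{\SI-\SP\}\hat u$. The basic energy identity $\frac{1}{2}\pa_t\|\hat u\|_{L^2_\xi}^2=\langle \SL\hat u,\hat u\rangle$ only controls the microscopic part $\{\SI-\SP\}\hat u$ via \eqref{s1.coerc} (plus the weighted coercivity for Models~2 and~3), so the missing dissipation for the hydrodynamic components $a$, $b$, $c$ must be extracted from the transport term. This is where Kawashima's compensating-function argument enters: one derives fluid-type evolution equations for $(a,b,c)$ by testing the equation against the hydrodynamic modes $M^{1/2}$, $\xi M^{1/2}$, $(|\xi|^2-d)M^{1/2}$, and then adds to $\|\hat u\|_{L^2_\xi}^2$ small correction terms of the form $\frac{\rmi\eps k_j}{1+|k|^2}(\FPsi_j\hat u\mid\hat u)$ whose time derivatives produce the missing positive contributions in $a$, $b$, $c$, modulo terms that can be absorbed by the microscopic dissipation. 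The factor $|k|^2/(1+|k|^2)$ appears naturally because these corrections are of first order in $k$ and must remain small enough to keep $\CE\sim\|\hat u\|_{L^2_\xi}^2$. The main obstacle is verifying the cancellations model by model, especially for Model~3 where the momentum component $b$ involves the Korn-type inequality mentioned in the abstract; the same Kawashima scheme works for all three models once the velocity-moment algebra is carried out.

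Given the pointwise bound $|\hat u(t,k,\cdot)|_{L^2_\xi}^2\leq Ce^{-\la t|k|^2/(1+|k|^2)}|\hat u(0,k,\cdot)|_{L^2_\xi}^2$, the torus case \eqref{thm.nf.3} is immediate: the zero-mean assumption \eqref{thm.nf.cid}, together with $\SB$-invariance of $\ker\SL$ in Fourier frequency $k=0$, restricts the relevant frequencies to $k\in\Z^d\setminus\{0\}$, where $|k|^2/(1+|k|^2)\geq 1/2$, yielding uniform exponential decay. For the whole-space case \eqref{thm.nf.1}, I would split the $k$-integration into low and high frequencies: on $|k|\leq 1$ the exponent behaves like $e^{-\la t|k|^2}$ and a standard Hausdorff--Young/Riesz--Thorin estimate against $\|\pa_x^{\al'}u_0\|_{Z_q}$ produces the rate $(1+t)^{-d(1/q-1/2)/2-m/2}$, while on $|k|\geq 1$ the exponent is bounded below by $e^{-\la t/2}$ and controls $\|\pa_x^\al u_0\|$. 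The inhomogeneous estimate \eqref{thm.nf.2} follows by Duhamel's formula; since $h\perp\ker\SL$ the source always lies in the microscopically dissipative subspace, and the natural dual norm there is precisely the $w^{-1/2}$-weighted $L^2$ norm (with $w=1$, $1+|\xi|^2$, $1+|\xi|$ for Models~1--3), so testing against the semigroup acting on this dual norm yields the claimed bound with $\|w^{-1/2}\pa_x^{\al'}h\|_{Z_q}$ and $\|w^{-1/2}\pa_x^\al h\|$ on the right-hand side.
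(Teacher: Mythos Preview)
Your proposal is correct and follows essentially the same route as the paper: Fourier transform in $x$, macro--micro decomposition, and Kawashima-type cross terms built from the fluid moment equations (the paper's explicit corrections are inner products of $i$-th and $(i{+}1)$-th order moments, e.g.\ $\rmre\frac{(\rmi k\hat a\mid\hat b)}{1+|k|^2}$ for Model~1 and the functional $E_{\rm int}(\hat u)$ of Lemma~\ref{lem.m3f} for Model~3) to produce the Lyapunov inequality $\pa_t E(\hat u)+\frac{\la|k|^2}{1+|k|^2}E(\hat u)\le C\|w^{-1/2}\hat h\|_{L^2_\xi}^2$, from which both the torus and whole-space estimates follow by the standard frequency arguments you describe. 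One small correction: the Korn-type inequality is \emph{not} needed in the no-force case for Model~3---in Fourier variables the control of $\hat b$ comes from the elementary identity $\sum_{ij}|k_i\hat b_j+k_j\hat b_i|^2=2|k|^2|\hat b|^2+2|k\cdot\hat b|^2$, and the Korn inequality of the appendix is reserved for the confining-force case of Theorem~\ref{thm.f}.
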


\begin{theorem}[case of confining force]\label{thm.f}
Consider \eqref{s1.eq} where $\Omega=\R^d$, $d\geq 1$, $V=V(x)$ is a
confining force with
\begin{equation*}
    \int_{\R^d} e^{-V(x)}dx=1,
\end{equation*}
 $\SL$ is one of the linear relaxation operator, linear
Fokker-Planck operator and linearized Boltzmann operator as defined
in Model 1, Model 2 and Model 3, respectively. Let $e^{t\SB} u_0$
denote the corresponding solution for initial data $u_0=u_0(x,\xi)$.
The following additional conditions on $V$ and $u_0$ are supposed to
hold.

\medskip

\noindent{Case of Model 1:} $V=\frac{|x|^2}{2}-\frac{d}{2}\ln
(2\pi)$, and
\begin{equation*}
    \iint_{\R^d\times\R^d} \CM^{1/2} u_0 dxd\xi=0.
\end{equation*}

\medskip

\noindent{Case of Model 2:}
\begin{eqnarray*}
&\dis \frac{1}{4}|\na_x V|^2-\frac{1}{2}\De_x V\to \infty\ \
\text{as}\ |x|\to \infty,\\
&\dis |\na_x^2 V|^2\leq \delta |\na_x V|^2+C_\delta, \ \ \forall\,
\delta>0,
\end{eqnarray*}
and
\begin{equation*}
    \iint_{\R^d\times\R^d} \CM^{1/2} u_0 dxd\xi=0.
\end{equation*}

\medskip

\noindent{Case of Model 3:} $V=\frac{|x|^2}{2}-\frac{d}{2}\ln
(2\pi)$, $d\geq 3$, and
\begin{equation*}
\iint_{\R^d\times\R^d} (1,x,\xi,x\cdot \xi,x\times \xi,
|x|^2,|\xi|^2)\CM^{1/2} u_0 dxd\xi=0,
\end{equation*}
where $(x\times\xi)_{ij}=x_i\xi_j-x_j\xi_i$, $1\leq i,j\leq d$.

\medskip

\noindent Then, under the above assumptions, there are constants $C$
and $\la>0$ such that
\begin{equation}\label{thm.f.1}
    \|e^{t\SB}\|_{\CH^1}\leq C e^{-\la t}\|u_0\|_{\CH^1}
\end{equation}
for any $t\geq 0$.
\end{theorem}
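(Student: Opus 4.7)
My plan is to build, following Villani's hypocoercivity framework, an equivalent Lyapunov functional on $\CH^1$ whose time derivative along the flow \eqref{s1.eq} is bounded above by a negative multiple of $\|\cdot\|^2_{\CH^1}$, from which Gronwall yields the exponential decay \eqref{thm.f.1}. The starting point is the base energy identity: testing \eqref{s1.eq} with $u$ kills the transport term (since $\ST$ is skew-symmetric on $L^2(dxd\xi)$) and gives
\begin{equation*}
\frac{1}{2}\frac{d}{dt}\|u\|^2\leq -\la_\SL\|\{\SI-\SP\}u\|^2_{\CH_\xi}.
\end{equation*}
This controls only the microscopic part, so I differentiate $\|\SY u\|^2$ and $\|\SX u\|^2$ using the pivotal commutator identities
\begin{equation*}
[\ST,\SY_i]=-\SX_i,\qquad [\ST,\SX_i]=\sum_j(\pa_{x_i}\pa_{x_j}V)\,\SY_j,
\end{equation*}
together with $[\SL,\SX_i]=0$ (since $\SL$ is $x$-local) and the model-specific behaviour of $[\SL,\SY_i]$. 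Directly, $\|\SY u\|^2$ gains dissipation from $\SL$ in Models 2 and 3 (where coercivity involves $\na_\xi$), while in Model 1 it inherits dissipation through $\{\SI-\SP_0\}u$; in all three models, $\|\SX u\|^2$ receives no direct contribution from $\SL$.

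The hypocoercive step is to add the indefinite cross term $\eps\sum_i (\SX_i u,\SY_i u)$ with $\eps>0$ small. Because $\ST$ is skew, the $(\ST u,\cdot)$ contributions collapse into commutator brackets, and the principal piece $-(\SX_i u,[\SY_i,\ST]u)=-\|\SX_i u\|^2$ produces exactly the missing $\SX$-dissipation; the reciprocal bracket $-([\SX_i,\ST]u,\SY_i u)$ creates $\|\SY u\|^2$-type errors which, together with the hypothesis $|\na_x^2 V|^2\leq\delta|\na_x V|^2+C_\delta$ in Model 2 (and $\na^2 V=\FI$ in the harmonic cases), remain controllable. Setting
\begin{equation*}
\CE(u)=\|u\|^2+A\|\SY u\|^2+B\|\SX u\|^2+2\eps\sum_i(\SX_i u,\SY_i u),
\end{equation*}
with constants chosen in the order $A\gg B\gg\eps>0$ guarantees $\CE\sim\|u\|^2_{\CH^1}$ and yields
\begin{equation*}
\frac{d}{dt}\CE\leq -\la\bigl(\|\{\SI-\SP\}u\|^2_{\CH_\xi}+\|\SY u\|^2+\|\SX u\|^2\bigr).
\end{equation*}

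To upgrade this into $\frac{d}{dt}\CE\leq -\la\CE$, I still have to dominate $\|u\|^2$ by $\|\SX u\|^2+\|\SY u\|^2$ modulo the kernel of $\SB$. Writing $f=\CM^{1/2}u$ conjugates $\SX$ and $\SY$ into the Riemannian gradients associated with the measure $\CM\,dxd\xi$, so the required estimates are Poincar\'e-type inequalities on $L^2(\CM^{-1})$ valid under orthogonality to the stationary modes of $\SB$; the moment conditions placed on $u_0$ in the statement, being conservation laws of the flow, are precisely those orthogonality constraints. For Model 1 only mass is conserved; for Model 2 with a general confining $V$ I invoke the standard weighted Poincar\'e inequality under the hypothesis $\frac14|\na V|^2-\frac12\De V\to\infty$; for Model 3 with $V=|x|^2/2$ the stationary kernel of $\SB$ has the additional Galilean and rotational invariants $x\cdot\xi$, $x\times\xi$, $|x|^2$, $|\xi|^2$ on top of the $(d+2)$-dimensional velocity kernel \eqref{def.ker.BE}, which accounts exactly for the richer list of moment cancellations imposed. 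The main obstacle lies in Model 3: the macroscopic momentum component $b=b^u$ of $\SP u$ is only controlled through fluid-type identities that produce the symmetric gradient $\na b+(\na b)^\top$ with respect to $e^{-V}dx$, and promoting this to a full gradient bound requires the Korn-type inequality with probability measure promised in the abstract; a subsequent weighted Poincar\'e inequality then returns to $\|b\|_{L^2(e^{-V})}$ and closes the argument.
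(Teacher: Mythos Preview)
Your proposal follows the Villani/Mouhot--Neumann hypocoercivity scheme, building the Lyapunov functional around the cross term $\sum_i(\SX_i u,\SY_i u)$; the paper takes a different route, based on the macro--micro decomposition and Kawashima's compensating-function argument applied to the fluid-type moment system for $(a,b,c)$. For Models~1 and~2 your outline is essentially correct and is a legitimate alternative to the paper's argument.

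For Model~3, however, there is a genuine gap. First, the assertion that the Boltzmann coercivity ``involves $\nabla_\xi$'' is false: for hard spheres one only has $-\langle u,\SL u\rangle\ge\la\int\nu(\xi)|\{\SI-\SP\}u|^2d\xi$ with $\nu\sim1+|\xi|$, and no velocity gradient appears. In particular $\SP\SY_i u\neq0$ (one computes $\SP\SY_i u=b_iM^{1/2}+\sum_j\Ga_{ij}(u)\xi_jM^{1/2}+\frac{1}{2d}\La_i(u)(|\xi|^2-d)M^{1/2}$), so the dissipation furnished by $\SL$ on $\SY u$ controls only $\|\{\SI-\SP\}\SY u\|^2$, not $\|\SY u\|^2$. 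More seriously, your displayed intermediate inequality
\[
\frac{d}{dt}\CE\le-\la\bigl(\|\{\SI-\SP\}u\|_{\CH_\xi}^2+\|\SY u\|^2+\|\SX u\|^2\bigr)
\]
cannot hold for any choice of $A,B,\eps$ without already invoking the moment hypotheses. Indeed, for the harmonic potential the functions $u(t)=(\cos t)\,x_j\CM^{1/2}-(\sin t)\,\xi_j\CM^{1/2}$ solve \eqref{s1.eq} exactly (they lie in $\ker\SL$ pointwise in $x$ and span an eigenspace of $\SB$ with eigenvalues $\pm i$); for these one has $\{\SI-\SP\}u\equiv0$, $\|\SX u\|^2+\|\SY u\|^2\equiv1$, while $\CE(u(t))=1+A\sin^2 t+B\cos^2 t-\eps\sin 2t$ has derivative $(A-B)\sin 2t-2\eps\cos 2t$, which takes strictly positive values. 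The same phenomenon occurs on the invariant span of $(x\cdot\xi,\,|x|^2,\,|\xi|^2)\CM^{1/2}$ and of $(x\times\xi)\CM^{1/2}$. These purely oscillatory macroscopic modes are precisely what the moment conditions in the theorem are designed to kill, but your cross-term computation is purely algebraic and never uses those conditions; so the inequality you claim is simply false at that stage.

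The paper avoids this obstruction by not attempting to extract $-\|\SX u\|^2$ from the mixed term. Instead it derives the fluid-type system \eqref{s4.f.fs} for $(a,b,c)$ and builds an interaction functional $\CE_{\rm int}$ (Lemma~\ref{lem.s4.fd}) out of pairings of successive moments; the moment hypotheses enter through Lemma~\ref{lem.s4.con}, which guarantees the zero-mean and zero-angular-momentum constraints on $a,b,c$ needed to apply the weighted Poincar\'e and Korn inequalities (Proposition~\ref{prop.pn}, Theorem~\ref{thm.kn}, Corollary~\ref{cor.kn}). The Korn inequality enters because the evolution of $\Ga_{ij}(\{\SI-\SP\}u)$ only yields the symmetrized combination $\SX_ib_j+\SX_jb_i$. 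If you want to repair your approach for Model~3, you must inject the fluid-system estimates for all of $(a,b,c)$---not just for $b$---at the stage where you currently claim control of $\|\SY u\|^2+\|\SX u\|^2$; at that point the argument essentially collapses onto the paper's.
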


\bigskip

\noindent{\it 1.6 Strategy of proof.} The main idea is the
macro-micro decomposition combined with Kawashima's argument on
dissipation of the hyperbolic-parabolic system. In fact, suppose
that ${\rm ker}\,\SL$ is spanned by an orthogonal set
\begin{equation*}
    \CA= \{\psi_0(\xi),\psi_1(\xi),\cdots,\psi_n(\xi)\}
\end{equation*}
and the corresponding orthogonal velocity projection is denoted by
\begin{equation*}
   \SP u=\sum_{i=0}^n a_i(t,x)\psi_i(\xi).
\end{equation*}
The total energy dissipation rate corresponding to certain temporal
energy functional of $u=\{\SI-\SP\}u+\SP u$ can be recovered as
follows:

\begin{itemize}
  \item[\underline{Step 1}.] Starting from the equation
  \eqref{s1.eq}, one can make energy estimates to obtain the
  dissipation of the microscopic part $\{\SI-\SP\}u$ on the basis of
  the coercivity property \eqref{s1.coerc} of the linearized operator
  $\SL$.
\item[\underline{Step 2}.] One can derive a fluid-type one-order hyperbolic
system of $a_i(t,x)$ $(0\leq i\leq n)$ coupled with $\{\SI-\SP\} u$
which are actually of the moment equations in terms of the above
orthogonal set $\CA$ and some high-order moment functions. See
\eqref{s2.w.ab}, \eqref{s2.f.p3}, \eqref{s4.nf.fs} and
\eqref{s4.f.fs} for models under consideration. By applying
Kawashima's argument on dissipation of the mixed
hyperbolic-parabolic system \cite{Ka}, one can obtain the
dissipation of the macroscopic part $\SP u$ or equivalently
$a_i(t,x)$ $(0\leq i\leq n)$ on the basis of the fluid-type system.
\item[\underline{Step 3}.] Combining estimates in Step 1 and Step 2,
one can obtain a properly defined temporal Lyapunov functional which
is not only  equivalent with the desired total energy functional but
also captures the total energy dissipation rate.
\end{itemize}

Analytical tools are the Fourier transform for the case when there
is no forcing and the direct energy estimates otherwise. When there
is a potential force, we also need the Poincar\'{e} inequality and
Korn-type inequality which will be provided at the last section.

\section{Relaxation model}

In this section we prove Theorem \ref{thm.nf} and Theorem
\ref{thm.f} for Model 1.

\bigskip

\noindent{\it 2.1} {\it Case when $\Omega=\R^d$ and $V=0$}. To prove
\eqref{thm.nf.1} and \eqref{thm.nf.2}, we consider the Cauchy
problem
\begin{equation}\label{s2.e}
\left\{\begin{array}{rll}
 \dis \pa_t u+\xi \cdot \na_x u &= \SL u+ h, &\ \ t> 0,x\in
  \R^d,\xi\in \R^d,\\[2mm]
 \dis  u(0,x,\xi)&=u_0(x,\xi),&\ \ x\in
  \R^d,\xi\in \R^d,
\end{array}\right.
\end{equation}
where $\SL$ is the relaxation operator given in \eqref{def.lm1}, and
as in \eqref{thm.nf.ch}, $h=h(t,x,\xi)$ satisfies
\begin{equation}\label{s2.con.h}
    h(t,x,\xi)\perp {\rm ker}\,\SL,\ \ \forall\, t\geq 0,x\in \R^d.
\end{equation}
Notice that the solution to \eqref{s2.e}  can be written as
\begin{equation}\label{s2.sol}
    u(t)=e^{t\SB}u_0 +\int_0^t e^{-(t-s)\SB} h(s)ds,
\end{equation}
where $\SB=-\xi\cdot \na_x+\SL$.

We shall use the method of Fourier transform to deal with the
time-decay of the solution $u$ given by \eqref{s2.sol} in a unifying
manner so that \eqref{thm.nf.1} and \eqref{thm.nf.2} follow from the
case when $h=0$ and $u_0=0$, respectively. In fact, the Fourier
transform of \eqref{s2.e}$_1$ shows
\begin{equation*}
    \pa_t \hat{u}+\rmi \xi\cdot k \hat{u}=\SL \hat{u}+ \hat{h},
\end{equation*}
which after multiplying $\bar{\hat{u}}$ and taking velocity
integration and the real part, deduces
\begin{equation*}
    \frac{1}{2}\pa_t
    \|\hat{u}\|_{L^2_\xi}^2+\|\{\SI-\SP_0\}\hat{u}\|_{L^2_\xi}^2=\rmre
    \int_{\R^d}(\hat{h}\mid \hat{u})d\xi.
\end{equation*}
Due to \eqref{s2.con.h},
\begin{eqnarray*}
  \int_{\R^d}(\hat{h}\mid \hat{u})d\xi  &=&  \int_{\R^d}(\hat{h}\mid
  \{\SI-\SP_0\}\hat{u})d\xi
  \leq  \frac{1}{2}\|\hat{h}\|_{L^2_\xi}^2 +\frac{1}{2}
  \|\{\SI-\SP_0\}\hat{u}\|_{L^2_\xi}^2.
\end{eqnarray*}
Then, it follows that
\begin{equation}\label{s2.w.p1}
\pa_t
    \|\hat{u}\|_{L^2_\xi}^2+\|\{\SI-\SP_0\}\hat{u}\|_{L^2_\xi}^2\leq
    \|\hat{h}\|_{L^2_\xi}^2,
\end{equation}
which is the first estimate on the basis of the dissipative property
of $\SL$.  Next, we estimate $\SP_0 u=a M^{1/2}$ with $a=a^u=\langle
M^{1/2},u\rangle$. Recall $b=b^u=\langle\xi M^{1/2},u\rangle$. Then,
from \eqref{s2.e}$_1$, $a$ and $b$ satisfy  the fluid-type system
\begin{equation}\label{s2.w.ab}
\left\{\begin{array}{l}
\dis \pa_t a +\na_x \cdot b=0,\\[3mm]
\dis \pa_t b + \na_x a + \na_x \cdot \Ga (\{\SI-\SP_0\}u)=-b+
\langle \xi M^{1/2}, h\rangle,
\end{array}\right.
\end{equation}
where $\Ga=(\Ga_{ij})_{d\times d}$ is the moment function defined by
\begin{equation}\label{def.Ga}
    \Ga_{ij}(g)=\langle (\xi_i\xi_j-1)M^{1/2}, g\rangle,\ \ 1\leq
    i,j\leq d.
\end{equation}
Notice by the definition of $\SP_0$ that
\begin{equation*}
    \Ga(\{\SI-\SP_0\}u)=\langle \xi\otimes\xi M^{1/2},
    \{\SI-\SP_0\}u\rangle.
\end{equation*}
Here, we used \eqref{def.Ga} as the definition of $\Ga$ only for the
convenience of the later proof for Model 3. Taking further the
Fourier transform of \eqref{s2.w.ab} gives
\begin{equation*}
\left\{\begin{array}{l}
\dis \pa_t \hat{a} + \rmi k\cdot \hat{b}=0,\\[3mm]
\dis \pa_t \hat{b}+\rmi k \hat{a} +\rmi
\Ga(\{\SI-\SP_0\}\hat{u})\cdot k=-\hat{b}+\langle \xi M^{1/2},
\hat{h}\rangle
\end{array}\right.
\end{equation*}
where
\begin{equation*}
\left(\Ga(\{\SI-\SP_0\}\hat{u})\cdot
k\right)_i=\sum_{j=1}^d\Ga_{ij}(\{\SI-\SP_0\}\hat{u}) k_j.
\end{equation*}
For the above fluid-type system, from computations
\begin{eqnarray*}
|k|^2|\hat{a}|^2&=& ( \rmi k \hat{a}\mid \rmi k \hat{a}) =( \rmi k
\hat{a}\mid -\pa_t \hat{b}-\rmi
\Ga(\{\SI-\SP_0\}\hat{u})\cdot k-\hat{b}+\langle \xi M^{1/2},\hat{h}\rangle)\\
&=&-\pa_t (\rmi \hat{a}\mid \hat{b}) + (\rmi k \pa_t \hat{a}\mid
\hat{b})\\
&&+ ( \rmi k \hat{a}\mid -\rmi \Ga(\{\SI-\SP_0\}\hat{u})\cdot
k-\hat{b}+\langle \xi M^{1/2},\hat{h}\rangle),
\end{eqnarray*}
one has
\begin{eqnarray*}
&\dis \pa_t \rmre (\rmi \hat{a}\mid \hat{b})  +
|k|^2|\hat{a}|^2\\
&\dis \leq |k\cdot \hat{b}|^2 +\eps |k|^2 |\hat{a}|^2
+C_\eps(1+|k|^2)\|\{\SI-\SP_0\}\hat{u}\|_{L^2_\xi}^2 + C_\eps
\|\hat{h}\|_{L^2_\xi}^2,
\end{eqnarray*}
where $\eps>0$ is arbitrary. Then, taking $\eps>0$ small and
dividing it by $1+|k|^2$ yield
\begin{equation}\label{s2.w.p2}
    \pa_t \rmre \frac{ (\rmi \hat{a}\mid \hat{b})
    }{1+|k|^2}+\frac{\la |k|^2}{1+|k|^2}|\hat{a}|^2 \leq C
    (\|\{\SI-\SP_0\}\hat{u}\|_{L^2_\xi}^2+\|\hat{h}\|_{L^2_\xi}^2),
\end{equation}
which is the second estimate based on the Kawashima's argument on
the dissipation of the hyperbolic-parabolic system.

Now, for $t\geq 0$, $k\in \R^d$, define
\begin{equation}\label{s2.def.E}
    E(\hat{u})=\|\hat{u}\|_{L^2_\xi}^2+ \kappa \rmre \frac{ (\rmi \hat{a}\mid \hat{b})
    }{1+|k|^2}
\end{equation}
with a small constant $\kappa>0$ to be determined later. One can let
$\kappa>0$ be small such that $E(\hat{u})\sim
\|\hat{u}\|_{L^2_\xi}^2 $. Taking $\kappa>0$ further small, the
linear combination of \eqref{s2.w.p1} and \eqref{s2.w.p2} gives
\begin{equation*}
    \pa_t E(\hat{u}) + \frac{\la |k|^2}{1+|k|^2} E(\hat{u})\leq
    C\|\hat{h}\|_{L^2_\xi}^2,
\end{equation*}
which with the help of Gronwall's inequality implies
\begin{eqnarray*}
  E(\hat{u}(t,k))\leq e^{-\frac{\la |k|^2 }{1+|k|^2}t}  E(\hat{u}_0(k))+C\int_0^t
  e^{-\frac{\la |k|^2 }{1+|k|^2}(t-s)}\|\hat{h}(s,k)\|_{L^2_\xi}^2
  ds.
\end{eqnarray*}
Hence, \eqref{thm.nf.1} and \eqref{thm.nf.2} follows from the above
estimate by using the standard procedure as in \cite{Ka,Gl,UY-AA},
and the details of the rest proof are omitted for simplicity. This
completes the proof of \eqref{thm.nf.1} and \eqref{thm.nf.2} for
Model 1 in Theorem \ref{thm.nf}.

\bigskip

\noindent{\it 2.2} {\it Case when $\Omega=\T^d$ and $V=0$}. The goal
in this case is to prove \eqref{thm.nf.3}. It actually can be
achieved by a little modification in the case of the whole space. In
fact, let's consider the Cauchy problem \eqref{s2.e} with the
spatial domain $\R^d$ replaced by $\T^d$ and $h=0$. Then,
\eqref{s2.w.p1} and \eqref{s2.w.p2} with $h=0$ still hold for
$\Omega=\T^d$. $a$ and $b$ still satisfy the system \eqref{s2.w.ab}.
Notice that one has the mass conservation
\begin{equation*}
    \frac{d}{dt}\int_{\T^d} a(t,x)dx=0.
\end{equation*}
Since initially
\begin{equation*}
    \iint_{\T^d\times \R^d}M^{1/2} u_0(x,\xi) dxd\xi=\int_{\T^d}
    a_0(x)dx=0
\end{equation*}
which corresponds to the assumption \eqref{thm.nf.cid}, then
\begin{equation*}
\left.\int_{\T^d} a(t,x)dx\right|_{t\geq 0}\equiv 0.
\end{equation*}
Thus, $\hat{a}(t,0)=0$ for any $t\geq 0$, which yields
\begin{equation*}
    \frac{|k|^2}{1+|k|^2} |\hat{a}|^2\geq \frac{1}{2}|\hat{a}|^2\ \ \ \text{for any}\  t\geq 0, k\in \Z^d.
\end{equation*}
Therefore, \eqref{s2.w.p2} is modified as
\begin{equation*}
    \pa_t \rmre \frac{ (\rmi \hat{a}\mid \hat{b})
    }{1+|k|^2}+\la |\hat{a}|^2 \leq C
    \|\{\SI-\SP_0\}\hat{u}\|_{L^2_\xi}^2.
\end{equation*}
By using the same definition of $ E(\hat{u})$ as in
\eqref{s2.def.E}, it holds
\begin{equation*}
    \pa_t E(\hat{u})+\la E (\hat{u})\leq 0,
\end{equation*}
which implies \eqref{thm.nf.3} from Gronwall's inequality and
$k$-integration. This completes the proof of \eqref{thm.nf.3} for
Model 1 in Theorem \ref{thm.nf}.

\bigskip

\noindent{\it 2.3} {\it Case when $\Omega=\R^d$ and $V$ is
confining}. In particular, let $V=\frac{|x|^2}{2}-\frac{d}{2}\ln
(2\pi)$. Take $u_0$ with
\begin{equation}\label{s2.f.id}
    \iint_{\T^d\times \R^d} \CM^{1/2} u_0 dxd\xi=0.
\end{equation}
Let $u(t)= e^{t\SB} u_0$ be the solution to the Cauchy problem
\begin{equation}\label{s2.f.e}
    \left\{\begin{array}{rll}
      \dis \pa_t u+ \ST u &=\SL u, &\ \ t>0,x\in\R^d,\xi \in \R^d,\\[2mm]
     \dis  u(0,x,\xi)&=u_0(x,\xi), &\ \ x\in\R^d,\xi \in \R^d.
    \end{array}\right.
\end{equation}

Next, we make energy estimates on $u$. For zero-order, it is
straightforward to get
\begin{equation}\label{s2.f.p0}
    \frac{1}{2}\frac{d}{dt}\|u\|^2+\|\{\SI-\SP_0\}u\|^2=0.
\end{equation}
For first-order, instead of directly estimating $x$ and $\xi$
derivatives, we use $\SX$ and $\SY$ differentiations. Take $1\leq
i\leq d$. Applying $\SX_i$ and $\SY_i$ to \eqref{s2.f.e}$_1$, one
has
\begin{eqnarray}
\pa_t \SX_i u+ \ST \SX_i u-\SL\SX_i u&=& [\ST,\SX_i]u-[\SL,\SX_i]u,\label{s2.f.p1}\\
\pa_t \SY_i u+ \ST \SY_i u-\SL\SY_i u&=&
[\ST,\SY_i]u-[\SL,\SY_i]u,\label{s2.f.p2}
\end{eqnarray}
where $[\cdot,\cdot]$ denotes commutator of two operators. Since
$\SL$ is local in $x$ and $\SX_i$ is only involved in spatial
derivative and multiplier, then $[\SL,\SX_i]=0$. Due to the fact
that $\SP_0\SY_i=\SY_i\SP_0=0$,
\begin{equation*}
[\SL,\SY_i]=[\SI-\SP_0,\SY_i]=-[\SP_0,\SY_i]=0.
\end{equation*}
For commutators containing $\ST$, the further computations show that
\begin{eqnarray}
 [\ST,\SX_i]u &=& [\xi\cdot \na_x,\SX_i] u- [\na_x V\cdot \na_\xi,\SX_i]
 u\nonumber \\
 &=&\frac{1}{2}\xi\cdot\na_x \pa_iV u +\na_x \pa_iV\cdot \na_\xi u=\na_x
 \pa_iV\cdot \SY u,\label{s2.f.p2-0}
\end{eqnarray}
and
\begin{eqnarray}
 [\ST,\SY_i]u &=& [\xi\cdot \na_x,\SY_i] u- [\na_x V\cdot \na_\xi,\SY_i]
 u \nonumber \\
 &=&-\pa_i u-\frac{1}{2}\pa_i V u=-\SX_i u.\label{s2.f.p2-1}
\end{eqnarray}
Thus, it follows from \eqref{s2.f.p1} and \eqref{s2.f.p2} that
\begin{eqnarray*}
&&\frac{1}{2}\frac{d}{dt}\|\SX u\|^2+\|\{\SI-\SP_0\}\SX
u\|^2=\sum_{ij=1}^d\iint_{\R^d\times\R^d}\pa_i\pa_jV \SX_i u\SY_j u
dxd\xi,\\
&&\frac{1}{2}\frac{d}{dt}\|\SY u\|^2+\|\{\SI-\SP_0\}\SY
u\|^2=-\sum_{ij=1}^d\iint_{\R^d\times\R^d}\de_{ij} \SX_i u\SY_j u
dxd\xi,
\end{eqnarray*}
where $\de_{ij}$ is the Kronecker delta. Noticing $\pa_i\pa_j
V=\de_{ij}$ by the definition of $V$ in the considered case, one has
\begin{eqnarray}
&&\frac{1}{2}\frac{d}{dt}(\|\SX u\|^2+\|\SY
u\|^2)+\|\{\SI-\SP_0\}\SX u\|^2+\|\{\SI-\SP_0\}\SY
u\|^2=0.\label{s2.f.p0-1}
\end{eqnarray}
Since $\SP_0 \SY=0$, the rest is to obtain the dissipation rate
corresponding to $\SP_0\SX u=\SX \SP_0 u$ and $\SP_0 u$, or
equivalently $\SX a$ and $a$. We shall again turn to the fluid-type
system satisfied by $a$ and $b$.

Notice that when $V$ is nontrivial, similarly as before, from
\eqref{s2.f.e}$_1$, $a$ and $b$ satisfy
\begin{equation}\label{s2.f.p0-2}
    \left\{\begin{array}{l}
     \dis \pa_t (a e^{-\frac{V}{2}})+\na_x \cdot (b
     e^{-\frac{V}{2}})=0,\\[3mm]
     \dis \pa_t (b  e^{-\frac{V}{2}})+\na_x (a
     e^{-\frac{V}{2}})+\na_x V a  e^{-\frac{V}{2}}+\na_x\cdot [\Ga(\{\SI-\SP_0\}u)
e^{-\frac{V}{2}}]=-be^{-\frac{V}{2}}.
    \end{array}\right.
\end{equation}
Equivalently, the above system can be rewritten as
\begin{equation}\label{s2.f.p3}
    \left\{\begin{array}{l}
     \dis \pa_t a -\SX^\ast\cdot b=0,\\[3mm]
     \dis \pa_t b+\SX a -\SX^\ast\cdot  \Ga(\{\SI-\SP_0\}u)+b=0,
    \end{array}\right.
\end{equation}
where $\SX^\ast$ is the adjoint operator of $\SX$ given by
\begin{equation*}
    \SX^\ast_i u=-e^{\frac{V(x)}{2}} \pa_{x_i} (e^{-\frac{V(x)}{2}}
    u)=(\frac{1}{2}\pa_{x_i} V -\pa_{x_i})u,\ \ 1\leq i\leq d.
\end{equation*}
Noticing that \eqref{s2.f.id} implies
\begin{equation*}
    \int_{\R^d} e^{-\frac{V(x)}{2}}a(0,x) dx=0,
\end{equation*}
it follows from the mass conservation \eqref{s2.f.p0-2}$_1$ that
\begin{equation}\label{s2.f.p0-3}
    \left.\int_{\R^d} e^{-\frac{V(x)}{2}}a(t,x) dx\right|_{t\geq
    0}=0.
\end{equation}
From \eqref{s2.f.p3}$_2$, one can compute
\begin{eqnarray*}
  \|\SX a\|^2 &=& (\SX a, \SX a)
  =(\SX a, -\pa_t b+\SX^\ast\cdot  \Ga(\{\SI-\SP_0\}u)-b)\\
  &=&-\frac{d}{dt} (\SX a, b) +(\SX \pa_t a, b) +(\SX a, \SX^\ast\cdot
  \Ga(\{\SI-\SP_0\}u)-b)
\end{eqnarray*}
where it further holds from \eqref{s2.f.p3}$_1$ that
\begin{equation*}
(\SX \pa_t a, b)=(\pa_t a, \SX^\ast\cdot b)=(\SX^\ast\cdot b,
\SX^\ast\cdot b)=\|\SX^\ast\cdot b\|^2.
\end{equation*}
Then, it follows that
\begin{equation}\label{s2.f.p4}
    \frac{d}{dt} (\SX a, b)+\la   \|\SX a\|^2 \leq \|\SX^\ast\cdot
    b\|^2+C(\|\SX^\ast\{\SI-\SP_0\}u\|^2+\|b\|^2).
\end{equation}

\begin{lemma}\label{lem.ineq.V}
As long as there is a constant $C$ such that
\begin{equation}\label{lem.ineq.V.1}
    |\De_x V|^2\leq C(|\na_x V|^2 +1)
\end{equation}
for all $x\in\R^d$, there is some constant $C$ such that
\begin{equation}\label{lem.ineq.V.2}
    \int_{\R^d}|\SX^\ast g|^2 dx\leq C\int_{\R^d}(|\SX g|^2+|g|^2
    )dx
\end{equation}
for $g=g(x)$.
\end{lemma}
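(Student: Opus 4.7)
The natural approach is to compare $\SX^\ast$ with $\SX$ via integration by parts on $L^2(\R^d_x)$ (with Lebesgue measure), in which $\SX_i$ and $\SX_i^\ast$ are indeed adjoints. From the explicit forms $\SX_i g=\tfrac12(\pa_i V)g+\pa_i g$ and $\SX_i^\ast g=\tfrac12(\pa_i V)g-\pa_i g$, a direct computation (integrating the cross term $\pa_i V\cdot g\,\pa_i g=\tfrac12\pa_i V\cdot \pa_i(g^2)$ by parts) yields the pair of dual identities
\begin{equation*}
\|\SX g\|^2=\|\na g\|^2+\tfrac14\int_{\R^d}|\na V|^2 g^2\,dx-\tfrac12\int_{\R^d}\De V\,g^2\,dx,
\end{equation*}
\begin{equation*}
\|\SX^\ast g\|^2=\|\na g\|^2+\tfrac14\int_{\R^d}|\na V|^2 g^2\,dx+\tfrac12\int_{\R^d}\De V\,g^2\,dx.
\end{equation*}
Subtracting the two gives the clean identity $\|\SX^\ast g\|^2=\|\SX g\|^2+\int_{\R^d}\De V\,g^2\,dx$, which reduces \eqref{lem.ineq.V.2} to controlling the single term $\int|\De V|g^2\,dx$ by $C(\|\SX g\|^2+\|g\|^2)$.

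To handle this term I would invoke the hypothesis \eqref{lem.ineq.V.1}, which in the form $|\De V|\le C(|\na V|+1)$ gives
\begin{equation*}
\int_{\R^d}|\De V|g^2\,dx\le C\int_{\R^d}|\na V|g^2\,dx+C\|g\|^2\le \eta\int_{\R^d}|\na V|^2 g^2\,dx+C_\eta\|g\|^2
\end{equation*}
for any $\eta>0$, by Young's inequality. So everything reduces to bounding $\int|\na V|^2 g^2\,dx$ in terms of $\|\SX g\|^2$ and $\|g\|^2$.

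The bootstrap is closed using the first identity of Step~1. Dropping the non-negative $\|\na g\|^2$ and using $|\De V|$ as above yields
\begin{equation*}
\tfrac14\int_{\R^d}|\na V|^2 g^2\,dx\le \|\SX g\|^2+\tfrac12\int_{\R^d}|\De V|g^2\,dx\le \|\SX g\|^2+\tfrac{\eta}{2}\int_{\R^d}|\na V|^2 g^2\,dx+C_\eta\|g\|^2.
\end{equation*}
Choosing $\eta$ small enough to absorb the $|\na V|^2$ term into the left-hand side gives $\int|\na V|^2 g^2\,dx\le C(\|\SX g\|^2+\|g\|^2)$, and feeding this back into the bound on $\int|\De V|g^2\,dx$ and then into the key identity delivers \eqref{lem.ineq.V.2}.

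The only delicate point is the absorption step: the hypothesis \eqref{lem.ineq.V.1} is sharp enough that $|\De V|$ behaves at most like $|\na V|$ (plus a bounded term), which is precisely what is needed to absorb the potentially unbounded $|\na V|^2$ contribution with a small constant. No real obstacle beyond this careful bookkeeping.
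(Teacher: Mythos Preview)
Your proof is correct and follows essentially the same route as the paper: both start from the identity $\|\SX^\ast g\|^2=\|\SX g\|^2+\int_{\R^d}\De_x V\,g^2\,dx$, then use the hypothesis together with the companion identity $\|\SX g\|^2=\|\na g\|^2+\int_{\R^d}\big(\tfrac14|\na V|^2-\tfrac12\De V\big)g^2\,dx$ to absorb a small multiple of the unbounded potential term. The only cosmetic difference is that the paper applies the hypothesis in its squared form $|\De V|^2\le C(|\na V|^2+1)$ directly (via $\De V\le \eps|\De V|^2+\tfrac{1}{4\eps}$) and absorbs $\int\De V\,g^2$ back into itself, whereas you first pass to $|\De V|\le C(|\na V|+1)$ and absorb $\int|\na V|^2 g^2$ into its own appearance in the identity for $\|\SX g\|^2$; the underlying mechanism is the same.
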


\begin{proof}
From integration by parts,
\begin{equation}\label{lem.ineq.V.p1}
    \int_{\R^d} |\SX^\ast g|^2dx=\int_{\R^d} |\SX g|^2 dx
    +\int_{\R^d} |g|^2 \De_x V dx.
\end{equation}
Under the assumption \eqref{lem.ineq.V.1},
\begin{eqnarray*}
 \int_{\R^d} |g|^2 \De_x V dx &\leq & \int_{\R^d} |g|^2 (\eps |\De_x V|^2+\frac{1}{4\eps})
 dx\\
 &\leq & \int_{\R^d} |g|^2 (C\eps|\na_x V|^2+C\eps+\frac{1}{4\eps})
 dx\\
 &=&4C\eps\int_{\R^d} |g|^2 (\frac{1}{4}|\na_x V|^2-\frac{1}{2}\De_x V) dx +2C\eps
 \int_{\R^d} |g|^2 \De_x V dx\\
 &&+ (C\eps +\frac{1}{4\eps})\int_{\R^d}|g|^2dx
\end{eqnarray*}
for arbitrary $\eps>0$. Taking $\eps>0$ small, one has
\begin{equation*}
 \int_{\R^d} |g|^2 \De_x V dx\leq C  \int_{\R^d} |\SX g|^2
 dx+C\int_{\R^d}|g|^2dx.
\end{equation*}
Then, \eqref{lem.ineq.V.2} follows by plugging the above inequality
into \eqref{lem.ineq.V.p1}.\end{proof}

So, by applying Lemma \ref{lem.ineq.V} to \eqref{s2.f.p4}, one has
\begin{equation*}
    \frac{d}{dt} (\SX a, b)+\la   \|\SX a\|^2 \leq C(\|\{\SI-\SP_0\}\SX u\|^2+\|\{\SI-\SP_0\}u\|^2).
\end{equation*}
Furthermore, due to \eqref{s2.f.p0-3}, one has Poincar\'{e}
inequality
\begin{equation*}
    \|\SX a\|^2\geq \la \|a\|^2
\end{equation*}
from Proposition \ref{prop.pn}. Then, it follows that
\begin{equation}\label{s2.f.p5}
    \frac{d}{dt} (\SX a, b)+\la  ( \|\SX a\|^2+\|a\|^2) \leq C(\|\{\SI-\SP_0\}\SX u\|^2
    +\|\{\SI-\SP_0\}u\|^2).
\end{equation}

Now, let us define a temporal functional
\begin{equation*}
    \CE(u(t))=\|u\|^2+\|\SX u\|^2+\|\SY u\|^2+ \kappa (\SX a, b)
\end{equation*}
with a small constant $\kappa>0$ to be determined. Firstly,
$\kappa>0$ is chosen small such that
\begin{equation*}
 \CE(u(t))\sim \|u(t)\|_{\CH^1}^2.
\end{equation*}
$\kappa>0$ is further small enough such that the linear combination
of \eqref{s2.f.p0}, \eqref{s2.f.p0-1} and \eqref{s2.f.p5} gives
\begin{equation*}
    \frac{d}{dt}\CE(u(t))+\la \CE(u(t))\leq 0.
\end{equation*}
By Gronwall's inequality, this proves \eqref{thm.f.1} and hence
completes the proof of Theorem \ref{thm.f} for Model 1.

\section{Fokker-Planck equation}

In this section we prove Theorem \ref{thm.nf} and Theorem
\ref{thm.f} for Model 2. The proof in the case of no forcing can be
carried out in the completely same way as for Model 1. When a
stationary potential forcing is present, the proof for  Model 1 can
be refined to yield the exponential time-decay rate in $\CH^1$ even
for a class of  general potential functions essentially because the
Fokker-Planck operator enjoys the velocity regularity. This actually
has been studied in detail by Villani \cite{Vi}, and the
hypoelliptic theory is founded by  H\'{e}rau \cite{H-AA,H-JFA}.
Here, we shall give another proof which is based on the macro-micro
decomposition and Kawashima's dissipation argument on the
hyperbolic-parabolic system.

\bigskip

\noindent{\it 3.1} {\it Case when $\Omega=\R^d$ or $\T^d$ and
$V=0$}. We consider the Cauchy problem
\begin{equation}\label{s3.e}
\left\{\begin{array}{rll}
\dis \pa_t u+\xi \cdot \na_x u &= \SL u+
h, &\ \ t> 0,x\in
\Omega,\xi\in \R^d,\\[2mm]
\dis  u(0,x,\xi)&=u_0(x,\xi), &\ \ x\in
 \Omega,\xi\in \R^d,
\end{array}\right.
\end{equation}
where $\SL$ is the self-adjoint Fokker-Planck operator given in
\eqref{def.lm2}, and $h=h(t,x,\xi)$ satisfies
\begin{equation*}
    h(t,x,\xi)\perp {\rm ker}\,\SL,\ \ \forall\, t\geq 0,x\in \Omega.
\end{equation*}
As before, the solution to \eqref{s3.e} can be written as
\begin{equation*}
    u(t)=e^{t\SB}u_0 +\int_0^t e^{-(t-s)\SB} h(s)ds,
\end{equation*}
where $\SB=-\xi\cdot \na_x+\SL$. In this case,  Theorem \ref{thm.nf}
and Theorem \ref{thm.f} for Model 2 can be proved in the same way as
for Model 1. Thus, all details of the proof in this case are omitted
for simplicity.

\bigskip

\noindent{\it 3.2} {\it Case when $\Omega=\R^d$ and $V$ is
confining}. Suppose
\begin{eqnarray}
&\dis  \int_{\R^d} e^{-V(x)}dx=1,\label{s3.v.c1}\\
&\dis \frac{1}{4}|\na_x V|^2-\frac{1}{2}\De_x V\to \infty \ \
\text{as}\ \ |x|\to \infty,\label{s3.v.c2}\\
&\dis |\na_x^2 V|\leq \delta |\na_x V|^2 +C_\delta,\ \
\forall\,\delta>0.\label{s3.v.c3}
\end{eqnarray}
Take $u_0$ with
\begin{equation}\label{s3.v.c4}
    \iint_{\T^d\times \R^d}\CM^{1/2} u_0 dxd\xi=0.
\end{equation}
Let $u(t)= e^{t\SB} u_0$ be the solutio to the Cauchy problem
\begin{equation}\label{s3.f.e}
    \left\{\begin{array}{rll}
      \dis \pa_t u+ \ST u &=\SL u, &\ \ t>0,x\in\R^d,\xi \in \R^d,\\[2mm]
     \dis  u(0,x,\xi)&=u_0(x,\xi), &\ \ x\in\R^d,\xi \in \R^d.
    \end{array}\right.
\end{equation}

Next, we follow the same line of proof as for Model 1. It is
straightforward to get from \eqref{s3.f.e}$_1$ that
\begin{equation}\label{s3.f.p0}
    \frac{1}{2} \frac{d}{dt}\|u\|^2 +\la\|\{\SI-\SP_0\}
    u\|_\nu^2\leq 0,
\end{equation}
where $\nu=\nu(\xi)=1+|\xi|^2$. Recall the definition
\eqref{def.op.xy} of the operator $\SY$. Observe that since $\SL u=
\De_\xi u +\frac{1}{4} (2d-|\xi|^2)u$, then $[\SL, \SY]=\SY$. Thus,
similar to obtain \eqref{s2.f.p1}-\eqref{s2.f.p2} with identities
\eqref{s2.f.p2-0}-\eqref{s2.f.p2-1}, one has
\begin{eqnarray*}
  \pa_t \SX_i u +\ST \SX_i u-\SL \SX_i u &=& \na_x \pa_i V\cdot \SY u,\\
 \pa_t \SY_i u +\ST \SY_i u-\SL \SY_i u &=& -\SX_i u-\SY_i u,
\end{eqnarray*}
which imply
\begin{eqnarray}
  &\dis \frac{1}{2}\frac{d}{dt}\|\SX u\|^2 +\la \|\{\SI-\SP_0\} \SX u\|_\nu^2 \leq
  \sum_{ij=1}^d\iint_{\R^d\times \R^d}\pa_i\pa_j V \SX_i u \SY_j u
  dxd\xi,\label{s3.f.p1}\\
  &\dis \frac{1}{2}\frac{d}{dt}\|\SY u\|^2 +\la \|\{\SI-\SP_0\} \SY u\|_\nu^2 +\|\SY u\|^2\leq
  -\sum_{i=1}^d\iint_{\R^d\times \R^d}\SX_i u \SY_j u
  dxd\xi.\label{s3.f.p2}
\end{eqnarray}
The r.h.s. of \eqref{s3.f.p1} is bounded as
\begin{eqnarray*}
&& \sum_{ij=1}^d\iint_{\R^d\times \R^d}\pa_i\pa_j V \SX_i u \SY_j u
  dxd\xi\\
&& =\sum_{ij=1}^d\iint_{\R^d\times \R^d}\pa_i\pa_j V\{\SI-\SP_0\}
\SX_i u \SY_j \{\SI-\SP_0\}u
  dxd\xi\\
&&=\sum_{ij=1}^d\iint_{\R^d\times \R^d}\pa_i\pa_j
V\SY_j^\ast\{\SI-\SP_0\} \SX_i u  \{\SI-\SP_0\}u
  dxd\xi\\
  &&\leq \eps \|\{\SI-\SP_0\} \SX u\|_\nu^2+\frac{C}{\eps}\iint_{\R^d\times
  \R^d}|\na_x^2 V|^2 | \{\SI-\SP_0\}u|^2 dxd\xi
\end{eqnarray*}
for an arbitrary constant $\eps>0$ to be chosen later, where we used
$\SY \SP_0u=0$ and $\SY^\ast$ is the adjoint operator of $\SY$ given
by
\begin{equation*}
    \SY_j^\ast u=-e^{\frac{|\xi|^2}{4}} \pa_{\xi_j}(e^{-\frac{|\xi|^2}{4}}u
    )=(\frac{1}{2}\xi_j -\pa_{\xi_j})u.
\end{equation*}
Furthermore, due to the assumption \eqref{s3.v.c3} on $V$,
\begin{eqnarray*}
&&\iint_{\R^d\times
  \R^d}|\na_x^2 V|^2 | \{\SI-\SP_0\}u|^2 dxd\xi\\
  &&\leq \de \iint_{\R^d\times
  \R^d}|\na_x V|^2 | \{\SI-\SP_0\}u|^2 dxd\xi +C_\de \iint_{\R^d\times
  \R^d}| \{\SI-\SP_0\}u|^2 dxd\xi\\
 &&=4\de \iint_{\R^d\times
  \R^d}(\frac{1}{4}|\na_x V|^2-\frac{1}{2}\De_x V) | \{\SI-\SP_0\}u|^2 dxd\xi \\
  &&\ \ \ +2\de \iint_{\R^d\times
  \R^d}\De_x V | \{\SI-\SP_0\}u|^2 dxd\xi +C_\de \iint_{\R^d\times
  \R^d}| \{\SI-\SP_0\}u|^2 dxd\xi,
\end{eqnarray*}
which by smallness of $\delta>0$ implies
\begin{equation*}
\iint_{\R^d\times
  \R^d}|\na_x^2 V|^2 | \{\SI-\SP_0\}u|^2 dxd\xi \leq C\de \|\{\SI-\SP_0\}\SX
  u\|^2+ C_\de \|\{\SI-\SP_0\} u\|^2.
\end{equation*}
So, from the above estimates, \eqref{s3.f.p1} is bounded by
\begin{eqnarray*}
\frac{1}{2}\frac{d}{dt}\|\SX u\|^2 &+&\la \|\{\SI-\SP_0\} \SX
u\|_\nu^2 \dis \\
&\leq& \left(\eps+\frac{C\de}{\eps}\right)\| \{\SI-\SP_0\}\SX
u\|_\nu^2 +\frac{C_\de}{\eps}\| \{\SI-\SP_0\} u\|^2.
\end{eqnarray*}
Since $\eps>0$ and $\de>0$ can be arbitrarily small, it follows that
\begin{equation}\label{s3.f.p3}
\frac{1}{2}\frac{d}{dt}\|\SX u\|^2 +\la \|\{\SI-\SP_0\} \SX
u\|_\nu^2\leq C \| \{\SI-\SP_0\} u\|^2.
\end{equation}
Noticing
\begin{equation*}
    \langle \SX_i u, \SY_i u\rangle= \langle  \{\SI-\SP_0\}\SX_i u, \SY_i
    u\rangle,
\end{equation*}
one has from \eqref{s3.f.p2} that
\begin{equation}\label{s3.f.p4}
\frac{1}{2}\frac{d}{dt}\|\SY u\|^2 +\la \|\{\SI-\SP_0\} \SY
u\|_\nu^2\leq C \| \{\SI-\SP_0\} \SX u\|^2.
\end{equation}
Finally, we turn to the estimate on $\SP_0 \SX u$ and $\SP_0 u$.
Similar to obtain \eqref{s2.f.p3}, \eqref{s3.f.e}$_1$ also gives the
same fluid-type system:
\begin{equation*}
    \left\{\begin{array}{l}
     \dis \pa_t a -\SX^\ast\cdot b=0,\\[3mm]
     \dis \pa_t b+\SX a -\SX^\ast\cdot  \Ga(\{\SI-\SP_0\}u)+b=0,
    \end{array}\right.
\end{equation*}
with
\begin{equation*}
    \left.\int_{\R^d} e^{-\frac{V(x)}{2}}a(t,x) dx\right|_{t\geq
    0}=0.
\end{equation*}
again due to the mass conservation and initial condition
\eqref{s3.v.c4}. Applying the same argument as before and then using
Poincar\'{e} inequality in Proposition \ref{prop.pn} by assumptions
\eqref{s3.v.c1}-\eqref{s3.v.c2}, one has
\begin{equation}\label{s3.f.p5}
    \frac{d}{dt} (\SX a, b)+\la   (\|\SX a\|^2+\|a\|^2) \leq C(\|\{\SI-\SP_0\}\SX u\|^2+\|\{\SI-\SP_0\}u\|^2).
\end{equation}

Now, let us define a temporal functional
\begin{equation*}
    \CE(u(t))=\|u\|^2+\kappa_1\|\SX u\|^2+\kappa_2\|\SY u\|^2+ \kappa_3 (\SX a,
    b)
\end{equation*}
with constants $\kappa_i>0$ $(1\leq i\leq 3)$. One can choose
\begin{equation*}
    0<\kappa_3\ll \kappa_2\ll\kappa_1\ll 1
\end{equation*}
such that
\begin{equation*}
    \CE(u(t))\sim \|u(t)\|_{\CH^1}^2
\end{equation*}
and further the linear combination of \eqref{s3.f.p0},
\eqref{s3.f.p3}, \eqref{s3.f.p4} and \eqref{s3.f.p5} gives
\begin{equation*}
    \frac{d}{dt}\CE(u(t))+\la \CE(u(t))\leq 0.
\end{equation*}
By Gronwall's inequality, this proves \eqref{thm.f.1} and hence
completes the proof of Theorem \ref{thm.f} for Model 2.

\section{Boltzmann equation}

In this section we prove Theorem \ref{thm.nf} and Theorem
\ref{thm.f} for Model 3. Although this can be done along the same
line as for Model 1 and Model 2, it is a little more complicated for
Model 3 since ${\rm ker}\,\SL=d+2$ so that the linearized Boltzmann
operator $\SL$ is degenerate over a space with higher dimensions.
The key idea is still based on the macro-micro decomposition and
Kawashima's dissipation argument on the hyperbolic-parabolic system.
The additional difficulty in the presence of confining forces lies
in verifying the Korn-type inequality to obtain the dissipation of
momentum component in the fluid part, which is left to the next
section.

\bigskip

\noindent{\it 4.1} {\it Case when $\Omega=\R^d$ and $V=0$}. We
consider the Cauchy problem
\begin{equation}\label{s4.e}
\left\{\begin{array}{rll} \dis \pa_t u+\xi \cdot \na_x u &= \SL u+
h, &\ \ t> 0,x\in
  \R^d,\xi\in \R^d,\\[2mm]
 \dis  u(0,x,\xi)&=u_0(x,\xi), &\ \ x\in
  \R^d,\xi\in \R^d,
\end{array}\right.
\end{equation}
where $\SL$ is the linearized Boltzmann collision operator given in
\eqref{def.lm3}, and as in \eqref{thm.nf.ch}, $h=h(t,x,\xi)$
satisfies
\begin{equation*}
    h(t,x,\xi)\perp {\rm ker}\,\SL,\ \ \forall\, t\geq 0,x\in \R^d.
\end{equation*}
As before, the solution to \eqref{s4.e} can be written as
\begin{equation*}
    u(t)=e^{t\SB}u_0 +\int_0^t e^{-(t-s)\SB} h(s)ds,
\end{equation*}
where $\SB=-\xi\cdot \na_x+\SL$. Both parts in the above $u(t)$ will
be estimated in a unifying way by using the method of Fourier
transform similarly before.

Firstly, from \eqref{s4.e}$_1$, it is straightforward to obtain
\begin{equation}\label{s4.nf.hp1}
    \frac{1}{2} \pa_t \|\hat{u}\|_{L^2_\xi}^2 +\la
    \iint_{\R^d\times\R^d} \nu(\xi)|\{\SI-\SP\} \hat{u}|^2dxd\xi \leq
    C\|\nu^{-1/2}\hat{h}\|_{L^2_\xi}^2,
\end{equation}
where $\nu(\xi)\sim 1+|\xi|$ defined by \eqref{def.nu.m3} is the
collision frequency for the case of hard sphere model \cite{Gl}, and
the orthogonal velocity projection operator $\SP: L^2_\xi \to {\rm
ker}\,\SL$ is described by \eqref{def.pm3}. Next, we devote
ourselves to the estimate on $\SP u$ or equivalently $(a,b,c)$. In
fact, by taking $\frac{1}{2}d(d+5)+2$ number of velocity moments
\begin{equation*}
    M^{1/2}, \xi_iM^{1/2}, (|\xi|^2-d)M^{1/2},
    (\xi_i\xi_j-1)M^{1/2}, (|\xi|^2-d-2)\xi_i M^{1/2}
\end{equation*}
with $1\leq i,j\leq d$ for the equation \eqref{s4.e}$_1$, one has
the fluid-type system
\begin{equation}\label{s4.nf.fs}
    \left\{\begin{array}{l}
      \dis \pa_t a +\na_x\cdot b =0,\\[3mm]
      \dis \pa_t b +\na_x (a+2c)+\na_x\cdot \Ga (\{\SI-\SP\} u)=0,\\[3mm]
      \dis \pa_t c +\frac{1}{d}\na_x\cdot b +\frac{1}{2d}\na_x\cdot
      \Lambda (\{\SI-\SP\} u)=0,\\[3mm]
      \dis \pa_t [\Ga_{ij}(\{\SI-\SP\} u)+2c\de_{ij}]+\pa_ib_j+\pa_j
      b_i=\Ga_{ij}(r+h),\\[3mm]
      \dis \pa_t \Lambda_i(\{\SI-\SP\} u)+\pa_i c = \Lambda_i(r+h),
    \end{array}\right.
\end{equation}
where $a,b,c$ are defined in \eqref{def.pm3}, the matrix-valued
function $\Ga$ is defined by \eqref{def.Ga}, the moment function
$\Lambda=(\Lambda_i)_{1\leq i\leq d}$ is defined by
\begin{equation}\label{def.Lam}
\Lambda_i(g)=\langle(|\xi|^2-d-2)\xi_i M^{1/2}, g\rangle,
\end{equation}
and $r$ is denoted by
\begin{equation*}
    r=-\xi\cdot \na_x\{\SI-\SP\} u +\SL \{\SI-\SP\} u.
\end{equation*}
The detailed derivation of \eqref{s4.nf.fs} was given in
\cite{Duan-Torus} and thus is omitted for simplicity.

\begin{lemma}\label{lem.m3f}
There exists a functional $E_{\rm int}(\hat{u})$ given by
\begin{eqnarray}
 E_{\rm int}(\hat{u})  &=& \frac{1}{1+|k|^2}\sum_{i=1}^d (\rmi k_i \hat{c}\mid
 \Lambda_i
 (\{\SI-\SP\}\hat{u}))\nonumber \\
 &&+\frac{\kappa_1}{1+|k|^2}\sum_{ij=1}^d (\rmi k_i \hat{b}_j+\rmi k_j \hat{b}_i\mid
 \Ga_{ij}(\{\SI-\SP\}\hat{u})+2\hat{c}\de_{ij})\nonumber \\
 &&+\frac{\kappa_2}{1+|k|^2}\sum_{i=1}^d (\rmi k_i \hat{a}\mid \hat{b}_i)\label{lem.m3f.1}
\end{eqnarray}
for constants $0<\kappa_2\ll\kappa_1\ll 1$ such that there are
constants $\la>0$, $C$ such that
\begin{equation}
\pa_t \rmre E_{\rm
int}(\hat{u})+\frac{\la|k|^2}{1+|k|^2}(|\hat{a}|^2+|\hat{b}|^2+|\hat{c}|^2)\leq
C(\|\{\SI-\SP\}\hat{u}\|_{L^2_\xi}^2+\|\nu^{-1/2}\hat{h}\|_{L^2_\xi}^2)\label{lem.m3f.2}
\end{equation}
holds for any $t\geq 0$ and $k\in \R^d$.
\end{lemma}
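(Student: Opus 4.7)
The plan is to derive three Kawashima-type interaction estimates from the fluid-type system \eqref{s4.nf.fs}, one per summand of $E_{\rm int}(\hat{u})$, producing in turn dissipation of $\hat{c}$, of $\hat{b}$, and of $\hat{a}$. The hierarchy $0<\kappa_2\ll\kappa_1\ll 1$ is then tuned so that cross terms generated at each level are absorbed by the dissipation established at an earlier level. The overall structure extends the two-step Kawashima argument used for Model 1 in Section 2 (where $\SP u$ has only $a$- and $b$-components) to the $(d+2)$-dimensional kernel of the linearized Boltzmann operator.

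For the $\hat{c}$-dissipation, I would take the Fourier transform of the fifth equation of \eqref{s4.nf.fs}, $\pa_t \La_i(\{\SI-\SP\}\hat{u})+\rmi k_i\hat{c}=\La_i(\hat{r}+\hat{h})$, pair it with $\rmi k_i\hat{c}$, sum over $i$ and take the real part. Pulling the time derivative outside produces $\pa_t\rmre\sum_i(\rmi k_i\hat{c}\mid\La_i(\{\SI-\SP\}\hat{u}))+|k|^2|\hat{c}|^2$ on the left. The cross term $\rmre(\rmi k_i\pa_t\hat{c}\mid\La_i(\{\SI-\SP\}\hat{u}))$ is expanded using the third equation of \eqref{s4.nf.fs} and bounded by Cauchy-Schwarz by $\eps|k|^2|\hat{b}|^2/(1+|k|^2)$ plus microscopic moments, while the contribution of $\La_i(\hat{r})=-\rmi\langle(|\xi|^2-d-2)\xi_i(\xi\cdot k)M^{1/2},\{\SI-\SP\}\hat{u}\rangle+\langle\cdots,\SL\{\SI-\SP\}\hat{u}\rangle$ is controlled using $\nu$-weighted bounds on these moments. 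Dividing by $1+|k|^2$ converts $|k|^2$ into the weight $|k|^2/(1+|k|^2)$ appearing in \eqref{lem.m3f.2}.

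The $\hat{b}$- and $\hat{a}$-steps are analogous. For $\hat{b}$, I multiply the fourth equation of \eqref{s4.nf.fs} in Fourier by $\rmi k_i\hat{b}_j+\rmi k_j\hat{b}_i$ and sum over $i,j$; the identity $\sum_{ij}|k_i\hat{b}_j+k_j\hat{b}_i|^2=2|k|^2|\hat{b}|^2+2|k\cdot\hat{b}|^2$ produces the required $\hat{b}$-dissipation after pulling $\pa_t$ outside, while the cross term generated by replacing $\pa_t\hat{b}$ via the second equation of \eqref{s4.nf.fs} contributes terms of the form $|k|^2(|\hat{a}|^2+|\hat{c}|^2)$ with fixed constants together with microscopic remainders. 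For $\hat{a}$, I pair the second equation of \eqref{s4.nf.fs} with $\rmi k\hat{a}$ and obtain $|k|^2|\hat{a}|^2=-\pa_t(\rmi k\hat{a}\mid\hat{b})+(\rmi k\pa_t\hat{a}\mid\hat{b})-2(\rmi k\hat{a}\mid\rmi k\hat{c})-(\rmi k\hat{a}\mid\rmi\Ga(\{\SI-\SP\}\hat{u})\cdot k)$, the $O(1)$ cross term $2\rmre(\rmi k\hat{a}\mid\rmi k\hat{c})$ being the only obstruction to pure $\hat{a}$-dissipation.

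The main obstacle is closing the hierarchy of smallness constants. Each of the three interaction terms, after time differentiation, generates both microscopic remainders controllable by $\|\{\SI-\SP\}\hat{u}\|^2_{L^2_\xi}$ and $\|\nu^{-1/2}\hat{h}\|^2_{L^2_\xi}$, and cross contributions of the form $|k|^2/(1+|k|^2)$ times the other conservative components. Choosing $\kappa_1$ small allows the $\hat{c}$-dissipation (at weight $O(1)$) and the pending $\hat{a}$-dissipation to dominate the cross terms coming from the $\hat{b}$-step; then choosing $\kappa_2\ll\kappa_1$ ensures that the $O(1)$ cross term $|k|^2|\hat{c}|^2/(1+|k|^2)$ generated by the $\hat{a}$-step is strictly dominated by the $\hat{c}$-dissipation. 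Summing the three resulting estimates with these weights yields \eqref{lem.m3f.2}.
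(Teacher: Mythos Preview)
Your three-step Kawashima scheme is exactly the paper's approach, and the identities you quote (in particular $\sum_{ij}|k_i\hat{b}_j+k_j\hat{b}_i|^2=2|k|^2|\hat{b}|^2+2|k\cdot\hat{b}|^2$) are the ones used there. However, the closure of the hierarchy as you describe it does not work.

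In the $\hat{b}$-step you say the cross term coming from replacing $\pa_t\hat{b}$ contributes $|k|^2(|\hat{a}|^2+|\hat{c}|^2)$ \emph{with fixed constants}. If the $|\hat{a}|^2$ piece really carries a fixed constant, then at weight $\kappa_1$ it must be absorbed by the $\hat{a}$-dissipation, which only carries weight $\kappa_2\ll\kappa_1$; that is impossible. The paper resolves this by splitting asymmetrically: the $\hat{a}$-cross term is bounded by $\eps_2|k|^2|\hat{a}|^2$ with a further small parameter $\eps_2$ (chosen after $\kappa_1,\kappa_2$), at the price of $C\eps_2^{-1}|k|^2|\hat{c}|^2$, which is harmless because the $\hat{c}$-dissipation sits at weight $O(1)$.

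Relatedly, your stated reason for $\kappa_2\ll\kappa_1$ is not the binding one. The $\hat{c}$-cross term produced by the $\hat{a}$-step is dominated by the $O(1)$ $\hat{c}$-dissipation for any small $\kappa_2$, independently of $\kappa_1$. The genuine constraint forcing $\kappa_2\ll\kappa_1$ is the term $(\rmi k\pa_t\hat{a}\mid\hat{b})$ in your own $\hat{a}$-identity: using $\pa_t\hat{a}=-\rmi k\cdot\hat{b}$ this equals $|k\cdot\hat{b}|^2$, a $\hat{b}$-cross term with fixed coefficient that must be absorbed by the $\hat{b}$-dissipation at weight $\kappa_1$. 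You call the $\hat{c}$-cross term ``the only obstruction to pure $\hat{a}$-dissipation'', but this $\hat{b}$-term is the one that actually dictates $\kappa_2\ll\kappa_1$.
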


\begin{proof}
Write \eqref{s4.nf.fs} in terms of Fourier transform as
\begin{equation}\label{lem.m3f.p1}
    \left\{\begin{array}{l}
      \dis \pa_t \hat{a} +\rmi k\cdot \hat{b} =0,\\[3mm]
      \dis \pa_t \hat{b} +\rmi (\hat{a}+2\hat{c})+\rmi \Ga (\{\SI-\SP\} \hat{u})\cdot k=0,\\[3mm]
      \dis \pa_t \hat{c} +\frac{1}{d}\rmi k\cdot \hat{b} +\frac{1}{2d}\rmi k\cdot
      \Lambda (\{\SI-\SP\} \hat{u})=0,\\[3mm]
      \dis \pa_t [\Ga_{ij}(\{\SI-\SP\} \hat{u})+2\hat{c}\de_{ij}]+\rmi
      k_i\hat{b}_j+\rmi k_j
      \hat{b}_i=\Ga_{ij}(\hat{r}+\hat{h}),\\[3mm]
      \dis \pa_t \Lambda_i(\{\SI-\SP\} \hat{u})+\rmi k_i \hat{c} = \Lambda_i(\hat{r}+\hat{h}),
    \end{array}\right.
\end{equation}
for $1\leq i,j\leq d$, where
\begin{equation*}
    \hat{r}=-\rmi \xi\cdot k \{\SI-\SP\} \hat{u} +\SL \{\SI-\SP\} \hat{u}.
\end{equation*}

\medskip

\noindent{\it Step 1.} Estimate $\hat{c}$: Starting from the
highest-order moment equation \eqref{lem.m3f.p1}$_5$, one has
\begin{eqnarray*}
  |k|^2|\hat{c}|^2 &=& \sum_{i=1}^d (\rmi k_i \hat{c}\mid \rmi k_i
  \hat{c})=\sum_{i=1}^d (\rmi k_i \hat{c}\mid  -\pa_t \Lambda_i(\{\SI-\SP\} \hat{u})+\Lambda_i(\hat{r}+\hat{h}))\\
  &=&-\pa_t \sum_{i=1}^d (\rmi k_i \hat{c}\mid  \Lambda_i(\{\SI-\SP\} \hat{u}))
  + \sum_{i=1}^d (\rmi k_i \pa_t\hat{c}\mid  \Lambda_i(\{\SI-\SP\}
  \hat{u}))\\
  &&
  + \sum_{i=1}^d (\rmi k_i \hat{c}\mid\Lambda_i(\hat{r}+\hat{h}) ).
\end{eqnarray*}
Using \eqref{lem.m3f.p1}$_3$ to replace $\pa_t\hat{c}$ gives
\begin{eqnarray*}
&&\sum_{i=1}^d (\rmi k_i \pa_t\hat{c}\mid  \Lambda_i(\{\SI-\SP\}
  \hat{u}))\\
  &&= \sum_{i=1}^d (\frac{1}{d}\rmi k\cdot \hat{b} +\frac{1}{2d}\rmi k\cdot
      \Lambda (\{\SI-\SP\} \hat{u}) \mid  \rmi k_i\Lambda_i(\{\SI-\SP\}
  \hat{u}))\\
  &&\leq \eps_1 |k\cdot \hat{b}|^2+\frac{C}{\eps_1}\|\{\SI-\SP\}
  \hat{u}\|_{L^2_\xi}^2,
\end{eqnarray*}
where $0<\eps_1\leq 1$ is arbitrary to be chosen later. Moreover, it
holds that
\begin{equation*}
\sum_{i=1}^d (\rmi k_i \hat{c}\mid\Lambda_i(\hat{r}+\hat{h}) )\leq
\frac{1}{2}|k|^2|\hat{c}|^2 +C
(1+|k|^2)\|\{\SI-\SP\}\hat{u}\|_{L^2_\xi}^2
+C\|\nu^{-1/2}\hat{h}\|_{L^2_\xi}^2.
\end{equation*}
Then, it follows
\begin{eqnarray}
&&\pa_t \rmre\,\sum_{i=1}^d (\rmi k_i \hat{c}\mid
\Lambda_i(\{\SI-\SP\} \hat{u}))+\la  |k|^2|\hat{c}|^2\nonumber \\
&&\hspace{2cm}\leq \eps_1 |k\cdot \hat{b}|^2+\frac{C}{\eps_1}
(1+|k|^2)\|\{\SI-\SP\}\hat{u}\|_{L^2_\xi}^2
+C\|\nu^{-1/2}\hat{h}\|_{L^2_\xi}^2.\label{lem.m3f.p2}
\end{eqnarray}

\medskip

\noindent{\it Step 2.} Estimate $\hat{b}$:
 Observe the identity
\begin{equation*}
    \sum_{ij=1}^d\|\rmi k_i \hat{b}_j+\rmi k_j
    \hat{b}_i\|^2=2|k|^2|\hat{b}|^2+2\|k\cdot \hat{b}\|^2.
\end{equation*}
On the other hand, compute from  \eqref{lem.m3f.p1}$_4$ that
\begin{eqnarray*}
&&\sum_{ij=1}^d\|\rmi k_i \hat{b}_j+\rmi k_j
    \hat{b}_i\|^2\\
&&=\sum_{ij=1}^d ( \rmi k_i \hat{b}_j+\rmi k_j
    \hat{b}_i \mid  -\pa_t [\Ga_{ij}(\{\SI-\SP\}
    \hat{u})+2\hat{c}\de_{ij}]+\Ga_{ij}(\hat{r}+\hat{h}))\\
&&=-\pa_t \sum_{ij=1}^d ( \rmi k_i \hat{b}_j+\rmi k_j
    \hat{b}_i \mid \Ga_{ij}(\{\SI-\SP\}
    \hat{u})+2\hat{c}\de_{ij})\\
&&\ \ \ + \sum_{ij=1}^d ( \rmi k_i \pa_t\hat{b}_j+\rmi k_j
    \pa_t\hat{b}_i \mid \Ga_{ij}(\{\SI-\SP\}
    \hat{u})+2\hat{c}\de_{ij}) \\
&&\ \ \ + \sum_{ij=1}^d ( \rmi k_i \hat{b}_j+\rmi k_j
    \hat{b}_i \mid \Ga_{ij}(\hat{r}+\hat{h})).
\end{eqnarray*}
Using \eqref{lem.m3f.p1}$_2$ to replace $\pa_t\hat{b}$ gives
\begin{eqnarray*}
&&\sum_{ij=1}^d ( \rmi k_i \pa_t\hat{b}_j+\rmi k_j
    \pa_t\hat{b}_i \mid \Ga_{ij}(\{\SI-\SP\}
    \hat{u})+2\hat{c}\de_{ij})\\
&&=2\sum_{ij=1}^d (\pa_t\hat{b}_i \mid  \rmi k_j\Ga_{ij}(\{\SI-\SP\}
    \hat{u})+2 \rmi k_j\hat{c}\de_{ij})\\
&&=2\sum_{ij=1}^d (\rmi k_i (\hat{a}+2\hat{c})+\rmi \sum_{\ell=1}^d
k_\ell\Ga_{\ell i} (\{\SI-\SP\} \hat{u})\mid  \rmi
k_j\Ga_{ij}(\{\SI-\SP\}
    \hat{u})+2 \rmi k_j\hat{c}\de_{ij})\\
&&\leq \eps_2 |k|^2 |\hat{a}|^2 +\frac{C}{\eps_2}|k|^2|\hat{c}|^2
+\frac{C}{\eps_2}|k|^2 \|\{\SI-\SP\} \hat{u}\|_{L^2_\xi}^2,
\end{eqnarray*}
where we used the symmetry of $\Ga_{ij}$ and $0<\eps_2\leq 1$ is
arbitrary to be chosen later. Moreover, it is straightforward to
obtain
\begin{eqnarray*}
&\dis  \sum_{ij=1}^d ( \rmi k_i \hat{b}_j+\rmi k_j
    \hat{b}_i \mid \Ga_{ij}(\hat{r}+\hat{h}))\\
&\dis    \leq \frac{1}{2} \sum_{ij=1}^d\|\rmi k_i \hat{b}_j+\rmi k_j
    \hat{b}_i\|^2 +C (1+|k|^2) \|\{\SI-\SP\} \hat{u}\|_{L^2_\xi}^2
    +C\|\nu^{-1/2}\hat{h}\|_{L^2_\xi}^2.
\end{eqnarray*}
So, collecting the above estimates, one has
\begin{eqnarray}
&&\pa_t \rmre\,  \sum_{ij=1}^d ( \rmi k_i \hat{b}_j+\rmi k_j
    \hat{b}_i \mid \Ga_{ij}(\{\SI-\SP\}
    \hat{u})+2\hat{c}\de_{ij}) +|k|^2|\hat{b}|^2 \nonumber \\
&&\hspace{1cm}    \leq \eps_2 |k|^2
|\hat{a}|^2+\frac{C}{\eps_2}|k|^2|\hat{c}|^2 +\frac{C}{\eps_2}|k|^2
\|\{\SI-\SP\}
\hat{u}\|_{L^2_\xi}^2+C\|\nu^{-1/2}\hat{h}\|_{L^2_\xi}^2.\label{lem.m3f.p3}
\end{eqnarray}

\medskip

\noindent{\it Step 3.} Estimate $\hat{a}$: Similarly,
\eqref{lem.m3f.p1}$_2$ implies
\begin{eqnarray*}
  |k|^2|\hat{a}|^2 &=& \sum_{i=1}^d (\rmi k_i \hat{a}\mid \rmi k_i \hat{a})
  = \sum_{i=1}^d (\rmi k_i \hat{a}\mid  -\pa_t \hat{b}_i -2\rmi k_i\hat{c}-\sum_{j=1}^d\rmi k_j
  \Ga_{ij} (\{\SI-\SP\} \hat{u}))\\
  &=&-\pa_t  \sum_{i=1}^d (\rmi k_i \hat{a}\mid \hat{b}_i)+\sum_{i=1}^d (\rmi k_i \pa_t\hat{a}\mid
  \hat{b}_i)\\
  &&+\sum_{i=1}^d (\rmi k_i \hat{a}\mid -2\rmi k_i\hat{c}-\sum_{j=1}^d\rmi k_j
  \Ga_{ij} (\{\SI-\SP\} \hat{u})
\end{eqnarray*}
which after using \eqref{lem.m3f.p1}$_1$ to replace $\pa_t\hat{a}$
on the r.h.s. gives
\begin{eqnarray}
 \pa_t \rmre\,  \sum_{i=1}^d (\rmi k_i \hat{a}\mid \hat{b}_i)+\la
 |k|^2|\hat{a}|^2 \leq |k\cdot \hat{b}|^2 +C|k|^2 |\hat{c}|^2
 +C|k|^2\|\{\SI-\SP\} \hat{u}\|_{L^2_\xi}^2.\label{lem.m3f.p4}
\end{eqnarray}

\medskip

Finally, define $E_{\rm int}(\hat{u})$ by \eqref{lem.m3f.1}. Then,
for properly chosen constants $0<\kappa_2\ll \kappa_1\ll 1$ and
small constants $\eps_1>0,\eps_2>0$, \eqref{lem.m3f.2} follows from
the linear combination of the above three estimates
\eqref{lem.m3f.p2}, \eqref{lem.m3f.p3} and \eqref{lem.m3f.p4} and
further dividing it by $1+|k|^2$. This completes the proof of Lemma
\ref{lem.m3f}.
\end{proof}

\begin{remark}
As in \eqref{s2.w.p2}, \eqref{s2.f.p5} and \eqref{s3.f.p5}, Lemma
\ref{lem.m3f} shows that the dissipation of degenerate part which is
the kernel of the linearized Boltzmann equation with dimensions
equal to $d+2$ can be recovered from the fluid-type moment system
\eqref{s4.nf.fs}. This property was firstly observed by Guo
\cite{Guo-CMP,Guo-IUMJ}, and it was later improved in
\cite{Duan,Duan-Torus,DS09} and \cite{Guo-diff,Jang} for different
purposes. Precisely, \cite{Guo-IUMJ} is mainly based on an
elliptic-type equation of $b$ derived from \eqref{s4.nf.fs}$_3$-
\eqref{s4.nf.fs}$_4$. The aim of \cite{Duan} is to remove time
derivatives by constructing some functional which is similar to
\eqref{lem.m3f.1} but takes more complicated form. \cite{Duan-Torus}
introduced moment functions $\Ga$ and $\Lambda$ to refine the form
of \eqref{lem.m3f.1}. \cite{DS09} exactly used the same method to
deal with the Vlasov-Poisson-Boltzmann system. Here, it should be
emphasized that due to new estimates on the dissipation of $b$, the
current method of proof is more general in the sense that it can be
directly modified to apply to the case with a potential forcing, see
Lemma \ref{lem.s4.fd}. Notice that three terms in \eqref{lem.m3f.1}
are inner products of $i$-th order moment and $(i+1)$-th order
moment for $i=0,1,2$, respectively, and also it is in the same
spirit of Kawashima's construction of compensation functions
\cite{Ka-13,Gl}.
\end{remark}

Now, for $t\geq 0$, $k\in \R^d$, define
\begin{equation*}
    E(\hat{u})=\|\hat{u}\|_{L^2_\xi}^2+ \kappa_3 \rmre E_{\rm int}(\hat{u})
\end{equation*}
where $E_{\rm int}(\hat{u})$ is defined by \eqref{lem.m3f.1} and
$\kappa_3>0$ is to be chosen. Similarly before, one can let
$\kappa_3>0$ be small such that $E(\hat{u})\sim
\|\hat{u}\|_{L^2_\xi}^2 $ and  the linear combination of
\eqref{s4.nf.hp1} and \eqref{lem.m3f.2} gives
\begin{equation*}
    \pa_t E(\hat{u}) + \frac{\la |k|^2}{1+|k|^2} E(\hat{u})\leq
    C\|\nu^{-1/2}\hat{h}\|_{L^2_\xi}^2.
\end{equation*}
Hence, \eqref{thm.nf.1} and \eqref{thm.nf.2} follows from the above
estimate in the same way as before. This completes the proof of
\eqref{thm.nf.1} and \eqref{thm.nf.2} for Model 3 in Theorem
\ref{thm.nf}.

\bigskip

\noindent{\it 4.2} {\it Case when $\Omega=\T^d$ and $V=0$}. In this
case, \eqref{thm.nf.3} follows from the same argument as for Model 1
and Model 2. In fact, it suffices to check
\begin{equation*}
    \left.\int_{\T^d} (a,b,c)dx\right|_{t\geq 0}\equiv 0
\end{equation*}
which results from the fact that it initially holds at $t=0$ by
\eqref{thm.nf.cid} and the conservation laws
\begin{equation*}
    \frac{d}{dt}\int_{\T^d} (a,b,c)dx= 0
\end{equation*}
holds by \eqref{s4.nf.fs}$_1$-\eqref{s4.nf.fs}$_3$. This completes
the proof of \eqref{thm.nf.3} in Theorem \ref{thm.nf} for Model 3.

\bigskip

\noindent{\it 4.3} {\it Case when $\Omega=\R^d$ and $V$ is
confining}. In particular, let $V=\frac{|x|^2}{2}-\frac{d}{2}\ln
(2\pi)$ and $d\geq 3$. Take $u_0$ with
\begin{equation}\label{s4.f3.id}
    \iint_{\R^d\times\R^d}(1,x,\xi,x\cdot \xi,x\times
    \xi,|x|^2,|\xi|^2) \CM^{1/2}u_0 dxd\xi=0,
\end{equation}
where $(x\times \xi)_{ij}=x_i\xi_j-x_j\xi_i$ for $1\leq i,j\leq d$.
Let $u(t)=e^{t\SB}u_0$ be the solution to the Cauchy problem
\begin{equation}\label{s4.f3.e}
\left\{\begin{array}{rll}
 \dis \pa_t u+\ST u &= \SL u, &\ \ t> 0,x\in
  \R^d,\xi\in \R^d,\\[2mm]
  \dis u(0,x,\xi)&=u_0(x,\xi),&\ \ x\in
  \R^d,\xi\in \R^d.
\end{array}\right.
\end{equation}
Firstly, we verify that the property \eqref{s4.f3.id} can be
preserved at all time $t\geq 0$ for  the above Cauchy problem
\eqref{s4.f3.e}.

\begin{lemma}\label{lem.s4.con}
Under the assumption \eqref{s4.f3.id},
\begin{equation}\label{lem.s4.con.1}
    \iint_{\R^d\times\R^d}(1,x,\xi,x\cdot \xi,x\times
    \xi,|x|^2,|\xi|^2)\CM^{1/2}u(t,x,\xi) dxd\xi=0
\end{equation}
holds for any $t\geq 0$. Particularly, for any $t\geq 0$,
\begin{eqnarray*}
&\dis \int_{\R^d} e^{-\frac{V}{2}}a(t,x)dx=\int_{\R^d}
e^{-\frac{V}{2}}b_i(t,x)dx=\int_{\R^d} e^{-\frac{V}{2}}c(t,x)dx=0,\
1\leq i\leq d,\\
&\dis  \int_{\R^d} e^{-\frac{V}{2}} x\times b(t,x) dx=0.
\end{eqnarray*}
\end{lemma}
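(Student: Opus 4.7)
The plan is to test the equation $\pa_t u + \ST u = \SL u$ against each of the functions $\phi(x,\xi) \in \{1, x_i, \xi_i, x\cdot\xi, (x\times\xi)_{ij}, |x|^2, |\xi|^2\}$ weighted by $\CM^{1/2}$, and show that the resulting moments $M_\phi(t) := \iint \phi \CM^{1/2} u\,dxd\xi$ satisfy a closed linear ODE system whose zero state is invariant. First I would establish the general identity
\begin{equation*}
\frac{d}{dt} M_\phi(t) = \iint (\ST\phi)\,\CM^{1/2} u \,dxd\xi + \iint \phi\,\CM^{1/2} \SL u\, dxd\xi,
\end{equation*}
which follows from integration by parts in both $x$ and $\xi$. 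The key cancellation is that the two boundary-type contributions $\phi(\xi\cdot\na_x \CM^{1/2}) = -\tfrac{1}{2}\phi(x\cdot\xi)\CM^{1/2}$ and $-\phi(x\cdot\na_\xi \CM^{1/2}) = +\tfrac{1}{2}\phi(x\cdot\xi)\CM^{1/2}$ are exact opposites, since for $V=|x|^2/2$ we have $\na_x V = x$ matching $\na_\xi(|\xi|^2/2) = \xi$. This is the sense in which $\ST$ is antisymmetric with respect to the $\CM^{1/2}$-weighted pairing.

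Next I would verify that the collision contribution vanishes for each test function in the tuple. Since $\SL$ acts only on $\xi$ and is self-adjoint on $L^2_\xi$, it suffices that $\phi(x,\cdot) M^{1/2}(\cdot)$ lies in $\ker \SL = \mathrm{span}\{M^{1/2},\xi_\ell M^{1/2},|\xi|^2 M^{1/2}\}$ for each fixed $x$. This holds for all seven functions: $\phi\in\{1,x_i,|x|^2\}$ give $x$-dependent multiples of $M^{1/2}$; $\phi\in\{\xi_i,x\cdot\xi,(x\times\xi)_{ij}\}$ give $x$-dependent linear combinations of $\xi_\ell M^{1/2}$; and $|\xi|^2 M^{1/2}$ itself is in $\ker\SL$.

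Then I would compute $\ST\phi$ for each test function with $\ST = \xi\cdot\na_x - x\cdot\na_\xi$, obtaining $\ST(1)=0$, $\ST(x_i)=\xi_i$, $\ST(\xi_i)=-x_i$, $\ST(x\cdot\xi)=|\xi|^2-|x|^2$, $\ST((x\times\xi)_{ij})=0$, $\ST(|x|^2)=2x\cdot\xi$, and $\ST(|\xi|^2)=-2x\cdot\xi$. This closes up into three decoupled ODE subsystems on the moments $M_\phi$: a trivial conservation law for $M_1$ and $M_{(x\times\xi)_{ij}}$; a $2\times 2$ harmonic oscillator $(M_{x_i},M_{\xi_i})' = (M_{\xi_i},-M_{x_i})$; and a $3\times 3$ system among $(M_{x\cdot\xi},M_{|x|^2},M_{|\xi|^2})$. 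Each subsystem is linear with smooth coefficients, so the zero state is invariant and the hypothesis \eqref{s4.f3.id} propagates for all $t\geq 0$, establishing \eqref{lem.s4.con.1}.

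Finally, the particular consequences follow by unpacking the definitions \eqref{def.pm3}: $M_1 = \int e^{-V/2} a\,dx$, $M_{\xi_i} = \int e^{-V/2} b_i\,dx$, $M_{|\xi|^2} = \int e^{-V/2}\langle |\xi|^2 M^{1/2},u\rangle\,dx$ combined with $M_1=0$ gives $\int e^{-V/2} c\,dx = 0$, and $M_{(x\times\xi)_{ij}} = \int e^{-V/2}(x_i b_j - x_j b_i)\,dx$ delivers $\int e^{-V/2} x\times b\,dx = 0$. I do not expect any serious obstacle here: the only delicate point is justifying the integrations by parts, which is routine given the Gaussian decay provided by $\CM^{1/2}$ and the regularity of $u(t)\in\CH^1$ inherited from the Cauchy problem.
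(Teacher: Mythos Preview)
Your proof is correct and follows essentially the same route as the paper: both derive the same closed linear ODE system for the weighted moments and conclude by uniqueness for the zero initial data. The only cosmetic difference is that the paper detours through $f=\CM+\CM^{1/2}u$ (using collision invariants of $\SQ$) and explicitly reduces the $(x_i,\xi_i)$ and $(x\cdot\xi,|x|^2,|\xi|^2)$ blocks to scalar second-order ODEs $y_i''+y_i=0$, $z''+4z=0$, whereas you work directly with $u$ via self-adjointness of $\SL$ and simply invoke invariance of the zero state.
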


\begin{proof}
Let $(\!(\cdot,\cdot)\!)$ denote the usual inner product in
$L^2_{x,\xi}$.  For simplicity, define also the weighted inner
product $(\!(\cdot,\cdot)\!)_{\CM}$ by
\begin{equation*}
(\!(u,v)\!)_{\CM}=\iint_{\R^d\times\R^d} u v \CM^{1/2}dxd\xi.
\end{equation*}
Notice that $f=\CM+\CM^{1/2}u$ satisfies \eqref{def.lm3-1}. From
integration by parts, it is easy to obtain the following ODE system
\begin{equation*}
\left\{\begin{array}{l} \dis  \frac{d}{dt}(\!(1,f)\!)=0,\ \
    \frac{d}{dt}(\!(x\times \xi,f)\!)=0,\\[3mm]
\dis \frac{d}{dt}(\!(x,f)\!)=(\!(\xi,f)\!),\ \
\frac{d}{dt}(\!(\xi,f)\!)=-(\!(x,f)\!),\\[3mm]
\dis  \frac{d}{dt}(\!( x\cdot \xi,f)\!)=(\!(|\xi|^2-|x|^2,f)\!),\\[3mm]
\dis  \frac{d}{dt}(\!(|x|^2,f)\!)=2(\!(x\cdot \xi,f)\!),\ \ \dis
\frac{d}{dt}(\!(|\xi|^2,f)\!)=-2(\!(x\cdot \xi,f)\!).
\end{array}\right.
\end{equation*}
Equivalently, in terms of $u$, it holds that
\begin{equation}\label{lem.s4.con.p1}
\left\{\begin{array}{l} \dis \frac{d}{dt}(\!(1,u)\!)_{\CM}=0,\ \
    \frac{d}{dt}(\!(x\times \xi,u)\!)_{\CM}=0,\\[3mm]
\dis \frac{d}{dt}(\!(x,u)\!)_{\CM}=(\!(\xi,u)\!)_{\CM},\ \
\frac{d}{dt}(\!(\xi,u)\!)_{\CM}=-(\!(x,u)\!)_{\CM},\\[3mm]
\dis  \frac{d}{dt}(\!( x\cdot \xi,u)\!)_{\CM}=(\!(|\xi|^2-|x|^2,u)\!)_{\CM},\\[3mm]
\dis  \frac{d}{dt}(\!(|x|^2,u)\!)_{\CM}=2(\!(x\cdot
\xi,u)\!)_{\CM},\ \ \dis
\frac{d}{dt}(\!(|\xi|^2,u)\!)_{\CM}=-2(\!(x\cdot \xi,u)\!)_{\CM},
\end{array}\right.
\end{equation}
where we used the fact
\begin{equation*}
(\!(|\xi|^2-|x|^2,\CM)\!)=0,\ \
\CM=(2\pi)^{-d}e^{-\frac{|x|^2+|\xi|^2}{2}}.
\end{equation*}
Define temporal functions
\begin{equation*}
    y_i(t)=(\!(\xi_i,u)\!)_{\CM}\ \ (1\leq i\leq d),\ \ \ z(t)=(\!(x\cdot
    \xi,u)\!)_{\CM}.
\end{equation*}
Then, from \eqref{lem.s4.con.p1} and \eqref{lem.s4.con.1}, $y_i(t)$
and $z(t)$ satisfy the initial value problems of the linear
second-order ODE as follows
\begin{equation*}
   \left\{\begin{array}{l}
    \dis y_i''(t)+y_i(t)=0, \ \ t> 0\\[3mm]
   \dis  y_i(0)=(\!(\xi,u_0)\!)_{\CM}=0,\\[3mm]
   \dis y_i'(0)=-(\!(x_i,u_0)\!)_{\CM}=0,
    \end{array}\right.
\end{equation*}
and
\begin{equation*}
   \left\{\begin{array}{l}
    \dis z''(t)+ 4z(t)=0, \ \ t> 0\\[3mm]
   \dis  z(0)=(\!(x\cdot \xi,u_0)\!)_{\CM}=0,\\[3mm]
   \dis z'(0)=-(\!(|\xi|^2-|x|^2,u_0)\!)_{\CM}=0.
    \end{array}\right.
\end{equation*}
Hence, both solutions are trivial, i.e.
\begin{equation*}
    y_i(t)=0 \ (1\leq i\leq d)\ \ \text{and}\ \ z(t)=0
\end{equation*}
for any $t\geq 0$. Putting this back to the system
\eqref{lem.s4.con.p1} implies that all inner products in
\eqref{lem.s4.con.p1} vanish due to \eqref{lem.s4.con.1}. This
completes the proof of Lemma \ref{lem.s4.con}.
\end{proof}

Next, we proceed along the same line of proof as for Model 1. The
zero-order energy integration of \eqref{s4.f3.e}$_1$ gives
\begin{equation}\label{s4.f3.p1}
    \frac{1}{2}\frac{d}{dt}\|u\|^2+\la \iint_{\R^d\times
    \R^d}\nu(\xi)|\{\SI-\SP\}u|^2dxd\xi\leq 0.
\end{equation}
Similar to obtain \eqref{s2.f.p0-1}, for $\SX$ and $\SY$
differentiations, one has
\begin{eqnarray*}
&& \frac{1}{2}\frac{d}{dt}(\|\SX u\|^2+\|\SY u\|^2)+\la
\iint_{\R^d\times
    \R^d}\nu(\xi)|\{\SI-\SP\}\SX u|^2dxd\xi\\
&&\hspace{5mm}+\la \iint_{\R^d\times
    \R^d}\nu(\xi)|\{\SI-\SP\}\SY u|^2dxd\xi\leq -\sum_{i=1}^d
    \iint_{\R^d\times
    \R^d}[\SL,\SY_i]u \SY_i u dxd\xi.
\end{eqnarray*}
It is easy to check $\SY_i \SP u \in {\rm ker}\,\SL$ which implies
\begin{equation*}
[\SL,\SY_i]u=\SL \SY_i u-\SY_i \SL u=\SL \SY_i \{\SI-\SP\}u-\SY_i
\SL \{\SI-\SP\}u=[\SL,\SY_i] \{\SI-\SP\}u.
\end{equation*}
Since $[\SL,\SY_i]$ is a bounded operator from $L^2_\xi$ to
$L^2_\xi$ by \cite{DUYZ},
\begin{equation*}
 -\sum_{i=1}^d
    \iint_{\R^d\times
    \R^d}[\SL,\SY_i]u \SY_i u dxd\xi\leq \eps
    \|(a,b,c)\|^2+\frac{C}{\eps}\|\{\SI-\SP\}u\|^2
\end{equation*}
for an arbitrary constant $0<\eps\leq 1$ to be chosen later. Then,
it follows
\begin{eqnarray}
&& \frac{1}{2}\frac{d}{dt}(\|\SX u\|^2+\|\SY u\|^2)+\la
\iint_{\R^d\times
    \R^d}\nu(\xi)|\{\SI-\SP\}\SX u|^2dxd\xi \nonumber \\
&&\hspace{5mm}+\la \iint_{\R^d\times
    \R^d}\nu(\xi)|\{\SI-\SP\}\SY u|^2dxd\xi\leq  \eps
    \|(a,b,c)\|^2+\frac{C}{\eps}\|\{\SI-\SP\}u\|^2.\label{s4.f3.p2}
\end{eqnarray}

The rest is to deal with the dissipation rate corresponding to $\SP
\SX u$ and $\SP u$ or equivalently $\SX (a,b,c)$ and $(a,b,c)$.
Poincar\'{e} and Korn-type inequalities given in the next section
play a key role in this step. In fact, similar to get
\eqref{s4.nf.fs}, from the equation \eqref{s4.f3.e}$_1$ in the
presence of potential forcing, one can obtain the following
fluid-type system:
\begin{equation}\label{s4.f.fs}
    \left\{\begin{array}{l}
      \dis \pa_t a -\SX^\ast\cdot b =0,\\[3mm]
      \dis \pa_t b +\SX (a+2c)-\SX^\ast\cdot \Ga (\{\SI-\SP\} u)=0,\\[3mm]
      \dis \pa_t c +\frac{1}{d}\SX\cdot b -\frac{1}{2d}\SX^\ast\cdot
      \Lambda (\{\SI-\SP\} u)=0,\\[3mm]
      \dis \pa_t [\Ga_{ij}(\{\SI-\SP\} u)+2c\de_{ij}]+\SX_ib_j+\SX_j
      b_i=\Ga_{ij}(r),\\[3mm]
      \dis \pa_t \Lambda_i(\{\SI-\SP\} u)+\SX_i c = \Lambda_i(r),
    \end{array}\right.
\end{equation}
where moment functions $\Ga$ and $\Lambda$ are defined as in
\eqref{def.Ga} and \eqref{def.Lam} respectively, and for simplicity,
we still used $r$ to denote
\begin{equation*}
    r=-\xi\cdot \na_x\{\SI-\SP\}u +\na_x V\cdot \na_\xi \{\SI-\SP\}u
    +\SL \{\SI-\SP\}u.
\end{equation*}

\begin{lemma}\label{lem.s4.fd}
There exists a temporal functional $\CE_{\rm int} (u(t))$ given by
\begin{eqnarray}
\CE_{\rm int} (u(t))&=& \sum_{i=1}^d (\SX_i c,\Lambda_i(\{\SI-\SP\}u)) \nonumber \\
&&+\kappa_1 \sum_{ij=1}^d (\SX_i b_j+\SX_j b_i,\Ga_{ij}(\{\SI-\SP\}u)+2c\delta_{ij}) \nonumber \\
&&+\kappa_2 \sum_{i=1}^d (\SX_i a, b_i)\label{lem.s4.fd.1}
\end{eqnarray}
for constants $0<\kappa_2\ll \kappa_1\ll 1$ such that there are
constants $\la>0$, $C$ such that
\begin{equation}\label{lem.s4.fd.2}
    \frac{d}{dt}\CE_{\rm int} (u(t))+\la \CD(a,b,c)\leq C (\|\{\SI-\SP\}\SX u\|^2+\|\{\SI-\SP\}u\|^2)
\end{equation}
holds for any $t\geq 0$, where the dissipation rate $\CD(a,b,c)$
denotes
\begin{eqnarray*}
 \CD(a,b,c) &=& (\|\SX a\|^2+\|a\|^2) + \sum_{i=1}^d (\|\SX
 b_i\|^2+\|b_i\|^2)+ (\|\SX c\|^2+\|c\|^2).
\end{eqnarray*}
\end{lemma}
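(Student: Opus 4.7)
The plan is to mirror the three-step argument of Lemma \ref{lem.m3f}, replacing the Fourier multipliers $\rmi k$ by the operator $\SX$ and using the adjoint $\SX^\ast$ where moments need to be shifted. The fluid-type system \eqref{s4.f.fs} plays the same role as its Fourier counterpart \eqref{lem.m3f.p1}, and the three cross terms in \eqref{lem.s4.fd.1} are precisely the physical-space analogues of the three pieces in \eqref{lem.m3f.1} (inner products between $i$-th and $(i+1)$-th order macroscopic moments for $i=0,1,2$).

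First I would estimate $\SX c$ from the highest-order equation \eqref{s4.f.fs}$_5$: test it against $\SX c$ and move the time derivative outside,
\begin{equation*}
\|\SX c\|^2 = \sum_i (\SX_i c,\Lambda_i(r)-\pa_t\Lambda_i(\{\SI-\SP\}u))
= -\frac{d}{dt}\sum_i(\SX_i c,\Lambda_i(\{\SI-\SP\}u)) + \sum_i(\SX_i\pa_t c,\Lambda_i(\{\SI-\SP\}u)) + \text{remainder}.
\end{equation*}
Replace $\pa_t c$ by \eqref{s4.f.fs}$_3$ and absorb the term $\SX^\ast\cdot b$ via Lemma \ref{lem.ineq.V} and Young's inequality; the $r$-term carrying $\xi\cdot\na_x\{\SI-\SP\}u$, $\na_x V\cdot\na_\xi\{\SI-\SP\}u$ and $\SL\{\SI-\SP\}u$ is absorbed into $\|\{\SI-\SP\}\SX u\|^2+\|\{\SI-\SP\}u\|^2$, using again Lemma \ref{lem.ineq.V} to control the $\na_x V$ factor. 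This yields
\begin{equation*}
\frac{d}{dt}\sum_i(\SX_i c,\Lambda_i(\{\SI-\SP\}u))+\la\|\SX c\|^2
\leq \eps_1\|\SX b\|^2+C_{\eps_1}(\|\{\SI-\SP\}\SX u\|^2+\|\{\SI-\SP\}u\|^2).
\end{equation*}

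Next I estimate $\SX b$ from \eqref{s4.f.fs}$_4$ by testing against $\SX_i b_j+\SX_j b_i$ and using \eqref{s4.f.fs}$_2$ to replace $\pa_t b$. Here one must recover $\|\SX b\|^2$ from $\sum_{ij}\|\SX_i b_j+\SX_j b_i\|^2$, which is the step that requires an $\SX$-weighted Korn-type inequality (the one announced for the last section). Similarly, for $\SX a$ I use \eqref{s4.f.fs}$_2$ tested against $\SX a$ and replace $\pa_t a$ by \eqref{s4.f.fs}$_1$, producing the inner-product term $(\SX a,b)$ and absorbing $\|\SX^\ast\cdot b\|^2$ via Lemma \ref{lem.ineq.V}. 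The result is a pair of inequalities analogous to \eqref{lem.m3f.p3}--\eqref{lem.m3f.p4}, with $|k|^2$ replaced by the operator norms $\|\SX\cdot\|^2$.

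Finally I form the functional \eqref{lem.s4.fd.1} and take a linear combination with $0<\kappa_2\ll\kappa_1\ll 1$, choosing $\eps_1,\eps_2$ small enough so that the $\eps\|\SX b\|^2$ and $\eps\|\SX a\|^2$ error terms are absorbed into the dominant $\la(\|\SX a\|^2+\|\SX b\|^2+\|\SX c\|^2)$ on the left. To upgrade to the full dissipation $\CD(a,b,c)$ I invoke Lemma \ref{lem.s4.con}, which guarantees that $\int e^{-V/2}a\,dx=\int e^{-V/2}c\,dx=0$ and $\int e^{-V/2}b_i\,dx=\int e^{-V/2}(x\times b)\,dx=0$ at all times, so Proposition \ref{prop.pn} gives $\|\SX a\|^2\gtrsim \|a\|^2$, $\|\SX c\|^2\gtrsim\|c\|^2$, and the Korn-type inequality of the last section gives $\|\SX b\|^2\gtrsim\|b\|^2$ (the rotation-kernel being killed precisely by vanishing of $\int e^{-V/2}x\times b$). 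I expect the main obstacle to be this last point: adapting Korn's inequality to the $\SX$-derivative with the Gaussian weight $e^{-V/2}$ and checking that the relevant rigid-motion moments of $b$ indeed vanish identically in time, which is why Lemma \ref{lem.s4.con} must include both the momentum and angular-momentum conservation laws.
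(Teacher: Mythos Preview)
Your proposal is correct and follows essentially the same route as the paper: the paper also proceeds in three steps, testing \eqref{s4.f.fs}$_5$, \eqref{s4.f.fs}$_4$, \eqref{s4.f.fs}$_2$ against $\SX c$, $\SX_i b_j+\SX_j b_i$, $\SX a$ respectively, replacing the time derivatives via the lower-order moment equations, and invoking Lemma~\ref{lem.ineq.V}, Lemma~\ref{lem.s4.con}, Proposition~\ref{prop.pn}, and the Korn-type inequalities \eqref{thm.kn.1}--\eqref{cor.kn.2} at exactly the places you indicate. Two small points of phrasing: in the $c$-step the term arising from $\pa_t c$ is $\SX\cdot b$ (not $\SX^\ast\cdot b$), and the bound $\|b\|^2\lesssim\sum_{ij}\|\SX_i b_j+\SX_j b_i\|^2$ comes from the Korn inequality \eqref{thm.kn.1} (the bound $\|b\|^2\lesssim\|\SX b\|^2$ you wrote would be Poincar\'e, which only needs $\int e^{-V/2}b=0$), but neither affects the argument.
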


\begin{proof}
This follows from the same procedure as for the proof of Lemma
\ref{lem.m3f} which is on the basis of Fourier transform when there
is no external forcing. For completeness, we shall provide all
details of proof. Firstly, it is straightforward to check
\begin{eqnarray}
  \|\Ga(r)\|^2+\|\Lambda (r)\|^2 &\leq & C\iint_{\R^d\times\R^d}
  |\na_x\{\SI-\SP\}u|^2 +|\{\SI-\SP\}u|^2 (1+|\na_x V|^2)dx d\xi \nonumber \\
  &\leq & C (\|\{\SI-\SP\}\SX u\|^2+\|\{\SI-\SP\}u\|^2).\label{lem.s4.fd.p1}
\end{eqnarray}

\medskip

\noindent{\it Step 1.} Estimate on $c$: \eqref{s4.f.fs}$_5$ gives
\begin{eqnarray}
  \|\SX c\|^2 &=& \sum_{i=1}^d (\SX_i c,-\pa_t\Lambda_i
  (\{\SI-\SP\}u)+\Lambda_i(r)) \nonumber \\
  &=&-\frac{d}{dt}\sum_{i=1}^d (\SX_i c, \Lambda_i(\{\SI-\SP\}u))
  +\sum_{i=1}^d(\SX_i\pa_t c, \Lambda_i(\{\SI-\SP\}u)) \nonumber \\
  &&
  +\sum_{i=1}^d(\SX_i c, \Lambda_i(r)).\label{lem.s4.fd.p2}
\end{eqnarray}
Using  \eqref{s4.f.fs}$_3$ to replace $\pa_t c$, one has
\begin{eqnarray*}
&&\sum_{i=1}^d(\SX_i\pa_t c, \Lambda_i(\{\SI-\SP\}u))\\
&&=(-\frac{1}{d}\SX\cdot b +\frac{1}{2d}\SX^\ast\cdot \Lambda
(\{\SI-\SP\}u), \SX^\ast\cdot \Lambda(\{\SI-\SP\}u))\\
&&\leq \eps_1\|\SX\cdot b\|^2 +\frac{C}{\eps_1}\|\SX^\ast
\{\SI-\SP\}u\|^2
\end{eqnarray*}
for an arbitrary constant $0<\eps_1\leq 1$ to be chosen later. In
addition, it holds
\begin{equation*}
\sum_{i=1}^d(\SX_i c, \Lambda_i(r))\leq \frac{1}{2}\|\SX c\|^2
+C\|\Lambda (r)\|^2.
\end{equation*}
By putting the above two estimates into \eqref{lem.s4.fd.p2} and
using  \eqref{lem.s4.fd.p1} and Lemma \ref{lem.ineq.V}, one has
\begin{eqnarray}
&&\frac{d}{dt}\sum_{i=1}^d (\SX_i c,\Lambda_i(\{\SI-\SP\}u))+\la
(\|\SX c\|^2+\|c\|^2) \nonumber \\
&&\hspace{2cm}\leq \eps_1\|\SX\cdot b\|^2
+\frac{C}{\eps_1}(\|\{\SI-\SP\}\SX u\|^2+\|
\{\SI-\SP\}u\|^2),\label{lem.s4.fd.p3}
\end{eqnarray}
where due to Lemma \ref{lem.s4.con} and Poincar\'{e} inequality in
Proposition \ref{prop.pn}, $\|c\|^2$ was included in the dissipation
rate.

\medskip

\noindent{\it Step 2.} Estimate on $b$: \eqref{s4.f.fs}$_4$ gives
\begin{eqnarray}
  \sum_{ij=1}^d\|\SX_i b_j+\SX_j b_i\|^2 &=& \sum_{ij=1}^d (\SX_i b_j+\SX_j b_i,
  -\pa_t [\Ga_{ij}(\{\SI-\SP\}u)+2c\de_{ij}]+\Ga_{ij}(r)) \nonumber \\
  &=&-\frac{d}{dt}\sum_{ij=1}^d (\SX_i b_j+\SX_j b_i,
  \Ga_{ij}(\{\SI-\SP\}u)+2c\de_{ij}) \nonumber \\
  &&+\sum_{ij=1}^d (\SX_i \pa_t b_j+\SX_j \pa_t b_i,
  \Ga_{ij}(\{\SI-\SP\}u)+2c\de_{ij}) \nonumber \\
&&+\sum_{ij=1}^d (\SX_ib_j+\SX_j  b_i,
  \Ga_{ij}(r)).\label{lem.s4.fd.p4}
\end{eqnarray}
From \eqref{s4.f.fs}$_2$, $\pa_t b$ is replaced to obtain
\begin{eqnarray*}
&&\sum_{ij=1}^d (\SX_i \pa_t b_j+\SX_j \pa_t b_i,
  \Ga_{ij}(\{\SI-\SP\}u)+2c\de_{ij})\\
&&=2\sum_{ij=1}^d (\pa_t b_i,
  \SX_j^\ast\Ga_{ij}(\{\SI-\SP\}u)+2\SX_j^\ast c\de_{ij})\\
&&=2\sum_{ij=1}^d (-\SX_i (a+2c)+\sum_{\ell=1}^d\SX_\ell^\ast
\Ga_{\ell i}(\{\SI-\SP\}u),
  \SX_j^\ast\Ga_{ij}(\{\SI-\SP\}u)+2\SX_j^\ast c\de_{ij})\\
&&\leq \eps_2\|\SX a\|^2 + \frac{C}{\eps_2}(\|\SX^\ast c\|^2+\|\SX
c\|^2)+\frac{C}{\eps_2}\|\SX^\ast \{\SI-\SP\}u\|^2
\end{eqnarray*}
for an arbitrary constant $0<\eps_2\leq 1$ to be chosen later. The
final term on the r.h.s. of \eqref{lem.s4.fd.p4} is bounded by
\begin{equation*}
\sum_{ij=1}^d (\SX_ib_j+\SX_j  b_i,
  \Ga_{ij}(r))\leq \frac{1}{2}\sum_{ij=1}^d\|\SX_i b_j+\SX_j
  b_i\|^2+C\|\Ga(r)\|^2.
\end{equation*}
Similar to Step 1, it follows from  \eqref{lem.s4.fd.p4}  that
\begin{eqnarray}
&&\frac{d}{dt}\sum_{ij=1}^d (\SX_i b_j+\SX_j b_i,
  \Ga_{ij}(\{\SI-\SP\}u)+2c\de_{ij})+\la \sum_{i=1}^d (\|\SX
  b_i\|^2+\|b_i\|^2) \nonumber \\
&&\ \ \ \ \ \leq  \eps_2\|\SX a\|^2 + \frac{C}{\eps_2}(\|\SX
c\|^2+\|c\|^2)
 +\frac{C}{\eps_2}(\| \{\SI-\SP\}\SX u\|^2+\| \{\SI-\SP\}u\|^2),\label{lem.s4.fd.p5}
\end{eqnarray}
where due to Lemma \ref{lem.s4.con}, Korn inequalities
\eqref{thm.kn.1} and \eqref{cor.kn.2} were used.

\medskip

\noindent{\it Step 3.} Estimate on $a$: \eqref{s4.f.fs}$_2$ implies
\begin{eqnarray*}
  \|\SX a\|^2 &=& \sum_{i=1}^d (\SX_i a, -\pa_t b_i-2\SX_i c +\sum_{j=1}^d \SX_j^\ast \Ga_{ij}(
  \{\SI-\SP\}u))\\
  &=&-\frac{d}{dt} \sum_{i=1}^d (\SX_i a,  b_i)+
  \sum_{i=1}^d (\SX_i \pa_t a,  b_i)\\
  &&+\sum_{i=1}^d (\SX_i a, -2\SX_i c +\sum_{j=1}^d \SX_j^\ast \Ga_{ij}(
  \{\SI-\SP\}u)),
\end{eqnarray*}
where it further holds that
\begin{equation*}
 \sum_{i=1}^d (\SX_i \pa_t a,  b_i)= (\pa_t a,  \SX^\ast\cdot
 b)=\|\SX^\ast\cdot b\|^2\leq C \sum_{i=1}^d (\|\SX
 b_i\|^2+\|b_i\|^2)
\end{equation*}
by \eqref{s4.f.fs}$_1$, and
\begin{eqnarray*}
\sum_{i=1}^d (\SX_i a, -2\SX_i c &+&\sum_{j=1}^d \SX_j^\ast
\Ga_{ij}(
  \{\SI-\SP\}u)) \\
  &\leq & \frac{1}{2}\|\SX a\|^2 +C\|\SX c\|^2
  +C\|\SX^\ast  \{\SI-\SP\}u\|^2.
\end{eqnarray*}
Then, it follows
\begin{eqnarray}
&\dis \frac{d}{dt} \sum_{i=1}^d (\SX_i a,  b_i)+\la (\|\SX
a\|^2+\|a\|^2) \nonumber \\
&\dis \leq C \sum_{i=1}^d (\|\SX
 b_i\|^2+\|b_i\|^2)+C\|\SX c\|^2 +C (\| \{\SI-\SP\}\SX u\|^2+\|
 \{\SI-\SP\}u\|^2),\label{lem.s4.fd.p6}
\end{eqnarray}
where again due to Lemma \ref{lem.s4.con}, Poincar\'{e} inequality
in Proposition \ref{prop.pn} was used for $a$.

\medskip

Finally, define $\CE_{\rm int}(u(t))$ as in \eqref{lem.s4.fd.1}.
Then, for properly chosen constants $0<\kappa_2\ll \kappa_1\ll 1$
and small constants $\eps_1>0,\eps_2>0$, \eqref{lem.s4.fd.2} follows
from the linear combination of three inequalities
\eqref{lem.s4.fd.p3}, \eqref{lem.s4.fd.p5} and \eqref{lem.s4.fd.p6}.
This completes the proof of Lemma \ref{lem.s4.fd}.
\end{proof}

Now, for $t\geq 0$, define
\begin{equation*}
    \CE(u(t))=\|u\|^2 +\kappa_3 (\|\SX u\|^2+\|\SY
    u\|^2)+\kappa_4 \CE_{\rm int}(u(t))
\end{equation*}
where  $\CE_{\rm int}(u(t))$ is defined by \eqref{lem.s4.fd.1} and
constants $0<\kappa_4\ll \kappa_3\ll 1$ are to be chosen later. By
letting $0<\kappa_4\ll \kappa_3\ll 1$ be properly chosen, one has
\begin{equation*}
\CE(u(t))\sim \|u(t)\|_{\CH^1}^2.
\end{equation*}
On the other hand, by further properly choosing $0<\kappa_4\ll
\kappa_3\ll 1$, the linear combination of \eqref{s4.f3.p1},
\eqref{s4.f3.p2} and \eqref{lem.s4.fd.2} yields
\begin{equation*}
    \frac{d}{dt}\CE(u(t))+\la  \CE(u(t)) \leq 0,
\end{equation*}
where $\eps>0$ in \eqref{s4.f3.p2} was also chosen small enough.
Hence, \eqref{thm.f.1} follows from Gronwall inequality. This
completes the proof of Theorem \ref{thm.f} for Model 3.

\section{Appendix: Poincar\'{e} and Korn-type inequalities}

Let $V=V(x)$ be a smooth confining function over $\R^d$ with
\begin{equation}\label{iden1}
    \int_{\R^d}e^{-V(x)}dx=1.
\end{equation}
We need assumptions on $V$:
\begin{equation}\label{v-cond0}
    \frac{1}{4}|\na V|^2-\frac{1}{2}\De V \to \infty \ \
    \text{as}\ |x|\to \infty
\end{equation}
or
\begin{equation}\label{v-cond1}
    \frac{1}{4}|\na V|^2-\frac{1}{2}\De V -\sum_{ij=1}^d|\pa_i\pa_j V|\to \infty \ \
    \text{as}\ |x|\to \infty.
\end{equation}
Define operators $\SX_i$ $(1\leq i\leq d)$ associated with $V$ as in
\eqref{def.op.xy}. Throughout this section, differential operators
$\na,\De,\na^2$ denotes $\na_x,\De_x,\na^2_x$ for simplicity since
all functions are those of spatial variable. The following
Poincar\'{e} inequality in $\R^d$ has been used in this paper.

\begin{proposition}\label{prop.pn}
Let $d\geq 1$. Suppose \eqref{iden1} and \eqref{v-cond0} hold. There
is a constant $\la>0$ such that
\begin{equation}\label{prop.pn.1}
    \sum_{i=1}^d\int_{\R^d} |\SX_i a|^2 dx \geq \la \int_{\R^d}
    |a|^2 dx
\end{equation}
holds for any $a:\R^d\to \R\in L^2(\R^d)$ satisfying
\begin{equation*}
    \int_{\R^d} e^{-\frac{V(x)}{2}} a dx=0.
\end{equation*}
\end{proposition}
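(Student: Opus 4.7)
The plan is to recognize the quadratic form $\sum_{i=1}^d \|\SX_i a\|^2$ as that of a Witten-type Laplacian on $L^2(\R^d,dx)$, for which \eqref{v-cond0} is precisely a Persson-type confinement condition guaranteeing a spectral gap above the ground state. First I would compute the formal adjoint $\SX_i^\ast = -\pa_{x_i} + \tfrac12 \pa_{x_i}V$ and integrate by parts (the divergence term vanishes by the decay ensured by $e^{-V}\in L^1$) to obtain
\begin{equation*}
\sum_{i=1}^d \int_{\R^d}|\SX_i a|^2\,dx = \int_{\R^d}|\na a|^2\,dx + \int_{\R^d} W(x)\,|a|^2\,dx,\qquad W := \tfrac14|\na V|^2 - \tfrac12\De V.
\end{equation*}
Equivalently, the substitution $b = e^{V/2}a$ would rewrite the desired inequality as the standard weighted Poincar\'e inequality $\int e^{-V}|\na b|^2\,dx \geq \la \int e^{-V}|b|^2\,dx$ on the probability space $(\R^d, e^{-V}dx)$ for zero-mean $b$, but the Witten form is more convenient for spectral-theoretic arguments.

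Next I would identify the ground state: since $\SX_i(e^{-V/2}) = e^{-V/2}\pa_{x_i}(1) = 0$, the function $e^{-V/2}$ lies in the kernel of the quadratic form, is strictly positive, and by \eqref{iden1} is a unit vector in $L^2(\R^d,dx)$. The hypothesis $\int_{\R^d} e^{-V/2}a\,dx=0$ is then exactly orthogonality to this ground state. The problem thus reduces to producing a positive spectral gap between $0$ and the rest of the spectrum of the self-adjoint operator $H = -\De + W$ on $L^2(\R^d)$.

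Under \eqref{v-cond0} one has $W(x)\to+\infty$ as $|x|\to\infty$. By Persson's theorem the essential spectrum of $H$ lies at $+\infty$, equivalently $H$ has compact resolvent; concretely, this follows by combining the trapping bound $\int_{|x|\geq R}|a|^2\,dx \leq (\inf_{|x|\geq R}W)^{-1}\int W|a|^2\,dx$ with the Rellich--Kondrachov embedding $H^1(B_R)\hookrightarrow L^2(B_R)$ to show that the form domain of $H$ embeds compactly in $L^2(\R^d)$. Consequently $\si(H)$ is purely discrete. A standard Perron--Frobenius argument applied to the positivity-preserving semigroup $e^{-tH}$ shows that the positive ground state $e^{-V/2}$ is a simple eigenvector at the bottom eigenvalue $0$, and setting $\la := \la_2(H) > 0$, the min-max principle applied to $a\perp e^{-V/2}$ yields \eqref{prop.pn.1}.

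The main obstacle is the compact-resolvent / discreteness input; once that is in hand, simplicity of the ground state and the spectral-gap conclusion are automatic. The cleanest rigorous route is the form-domain compact embedding just sketched, which is classical but relies on the confinement hypothesis \eqref{v-cond0} in an essential way --- this is exactly why \eqref{v-cond0} is formulated precisely as the divergence at infinity of the Witten potential $W$.
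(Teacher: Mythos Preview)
Your argument is correct. The identity $\sum_i\|\SX_i a\|^2 = \|\na a\|^2 + \int W|a|^2\,dx$ with $W=\tfrac14|\na V|^2-\tfrac12\De V$ is exactly the computation the paper itself uses later (see \eqref{thm.kn.p0}--\eqref{thm.kn.p1} in the vector-valued setting), and from there your spectral route via compact resolvent of $-\De+W$ and Perron--Frobenius simplicity of the positive ground state $e^{-V/2}$ is standard and sound.

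However, the paper does not actually supply its own proof of this proposition: it simply refers to Villani \cite{Vi} for a \emph{constructive} argument in which the constant $\la$ can be computed explicitly. Your approach is genuinely different in spirit. The compactness/spectral-gap argument you give is clean and requires nothing beyond the confinement hypothesis \eqref{v-cond0}, but it is inherently non-constructive: Rellich--Kondrachov and Perron--Frobenius tell you only that $\la_2(H)>0$, not how large it is. The constructive route referenced by the paper (based on Bakry--\'Emery type criteria or explicit Lyapunov/carr\'e-du-champ computations) trades some elegance for a quantitative bound on $\la$, which matters downstream if one wants explicit decay rates in \eqref{thm.f.1}. Both approaches are valid for the qualitative statement as written.
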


For the proof of the above proposition, refer to \cite{Vi} on the
basis of the constructive method, where $\la$ can be explicitly
computed.

The following Korn-type inequality in $\R^d$ was also essentially
used in the previous proof to obtain the dissipation of the
macroscopic momentum.

\begin{theorem}\label{thm.kn}
Let $d\geq 2$. Suppose  \eqref{iden1} and \eqref{v-cond1} hold.
There is a constant $\la>0$ such that
\begin{equation}\label{thm.kn.1}
    \sum_{ij=1}^{d}\int_{\R^d}|\SX_i b_j+\SX_j b_i|^2
    dx\geq \la
    \sum_{i=1}^{d} \int_{\R^d}|b_i|^2 dx
\end{equation}
holds for any $b=(b_1,b_2,\cdots,b_d):\R^d\to \R^d\in (L^2(\R^d))^d$
satisfying
\begin{equation*}
\int_{\R^d} e^{-\frac{V(x)}{2}} b(x) dx=0
\end{equation*}
and
\begin{equation*}
 \int_{\R^d} e^{-\frac{V(x)}{2}}(x_ib_j-x_jb_i)dx=0,\ \ 1\leq i
 \neq j \leq d.
\end{equation*}

\end{theorem}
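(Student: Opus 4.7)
The plan is a compactness argument based on the change of variable $g := e^{V/2} b$, which (using $\SX_i b_j = e^{-V/2}\partial_{x_i} g_j$) reduces the claim to the weighted Korn inequality
\[
\int_{\R^d} |\nabla g + (\nabla g)^T|^2\,d\mu \;\geq\; \la \int_{\R^d} |g|^2\,d\mu, \qquad d\mu := e^{-V}\,dx,
\]
under the constraints $\int g\,d\mu = 0$ and $\int (x_i g_j - x_j g_i)\,d\mu = 0$ for $i \neq j$. Two integrations by parts yield the weighted Korn identity
\[
\int |Sg|^2\,d\mu = 2\int |\nabla g|^2\,d\mu + 2\int (\nabla\!\cdot\! g - g\cdot\nabla V)^2\,d\mu - 2\int g^T(\nabla^2 V)g\,d\mu,
\]
with $Sg := \nabla g + (\nabla g)^T$, while expanding $\int|\nabla(e^{-V/2}g_i)|^2\,dx \geq 0$ componentwise gives the Schr\"odinger-type bound $\int W|g|^2\,d\mu \leq \int |\nabla g|^2\,d\mu$ with $W := \tfrac{1}{4}|\nabla V|^2 - \tfrac{1}{2}\Delta V$.

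Assume the inequality fails and extract $\{b^{(n)}\}$ with $\sum_i\|b^{(n)}_i\|^2 = 1$, $\sum_{ij}\|\SX_i b^{(n)}_j + \SX_j b^{(n)}_i\|^2 \to 0$, and both constraints; let $g^{(n)} := e^{V/2} b^{(n)}$. Splitting $\sum_{ij}|\partial_i\partial_j V|$ inside $B_R$ (bounded by $K(R) := \sup_{|x|\leq R}\sum_{ij}|\partial_i\partial_j V|$) and outside (bounded by $W - M(R)$, with $M(R) := \inf_{|x| > R}(W - \sum_{ij}|\partial_i\partial_j V|) \to \infty$ by \eqref{v-cond1}), and combining with the Korn identity and the Schr\"odinger bound, one obtains
\[
2 M(R) \int_{|x| > R} |g^{(n)}|^2\,d\mu \;\leq\; \|Sg^{(n)}\|_{L^2(\mu)}^2 + 2 K(R).
\]
Letting $n\to\infty$ then $R\to\infty$ yields uniform tightness of $|b^{(n)}|^2 dx = |g^{(n)}|^2 d\mu$. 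On each ball $B_R$, the standard Korn inequality together with the local boundedness of $|\nabla V|$ gives $\|b^{(n)}\|_{H^1(B_R)} \leq C_R$. By Rellich--Kondrachov, a diagonal extraction, and the tightness, a subsequence $b^{(n_k)}$ converges strongly in $L^2(\R^d, dx)$ to some $b^\infty$ with $\|b^\infty\|_{L^2} = 1$.

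The limit $g^\infty := e^{V/2} b^\infty \in L^1_{\rm loc}$ satisfies $\nabla g^\infty + (\nabla g^\infty)^T = 0$ in the sense of distributions, hence $g^\infty(x) = c + Ax$ with $A$ skew. Passing the two orthogonality constraints to the limit yields $c + A\bar X = 0$ with $\bar X := \int x\,d\mu$, and, after substituting $c = -A\bar X$, the reduced equation $A\bar M + \bar M A = 0$ where $\bar M := \int (x - \bar X)(x - \bar X)^T\,d\mu$ is the positive-definite covariance of $\mu$. Diagonalizing $\bar M = ODO^T$ orthogonally and setting $\tilde A := O^T A O$, which is still skew, the equation reads $\tilde A_{ij}(D_{ii} + D_{jj}) = 0$; since $D_{ii} > 0$, this forces $\tilde A = 0$, whence $A = 0$ and then $c = 0$, contradicting $\|g^\infty\|_{L^2(\mu)} = 1$. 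The main obstacle is the tightness estimate above: the Schr\"odinger bound has coefficient exactly one, so the indefinite Hessian term $\int g^T(\nabla^2 V)g\,d\mu$ cannot be absorbed by the Dirichlet energy through any single naive inequality; the crucial input is the sharper growth in \eqref{v-cond1}, used via the local-global splitting at $B_R$ so that the exterior gap $M(R)\to\infty$ delivers the missing decay.
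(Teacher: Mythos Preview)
Your proof is correct and follows the same contradiction--compactness scheme as the paper. The weighted Korn identity you state is exactly the paper's Step~2 computation transported to the $g$-variables via $b=e^{-V/2}g$; the paper packages it as the single uniform bound $\int(|\nabla b^n|^2+w(x)|b^n|^2)\,dx\le C$ with $w(x)=\max\{W-\sum_{ij}|\partial_i\partial_j V|,1\}$, which delivers global $H^1$ control and tightness simultaneously, whereas you split the same information into a tightness estimate (Korn identity combined with the Schr\"odinger bound) and a separate local $H^1$ bound from the classical Korn inequality on balls. The only substantive difference is the rigidity endgame: the paper writes each $\tilde b_i=e^{V/2}b_i$ as a multilinear polynomial in $\tilde x_i$ and eliminates the coefficients by a case analysis on the degree $|\alpha|\in\{0,1,\dots,d-1\}$, while your reduction to the matrix equation $A\bar M+\bar M A=0$ for the skew part $A$ and the covariance $\bar M$ of $\mu$ is shorter and makes explicit that only nondegeneracy of the second moments of $\mu$ is being used. (Both arguments tacitly assume $\int|x|^2\,d\mu<\infty$, which is needed already for the second orthogonality constraint to make sense for arbitrary $b\in L^2$.)
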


\begin{proof}
We use the  contradiction argument.

\medskip

\noindent{\it Step 1.} Otherwise, for any $n\geq 1$, there is a
non-zero element $b^n=(b^n_1,b^n_2,\cdots,b^n_d)\in (L^2(\R^d))^d$
with
\begin{equation*}
\int_{\R^d} e^{-\frac{V(x)}{2}} b^n(x) dx=0
\end{equation*}
and
\begin{equation*}
 \int_{\R^d} e^{-\frac{V(x)}{2}} x\times b^n(x) dx=0,
\end{equation*}
such that
\begin{equation*}
    \sum_{ij=1}^{d}\int_{\R^d}|\SX_i b_j^n+\SX_j b_i^n|^2
    dx\leq \frac{1}{n}
    \sum_{i=1}^{d} \int_{\R^d}|b_i^n|^2 dx.
\end{equation*}
W.l.g., one can suppose
\begin{equation*}
 \sum_{i=1}^{d} \int_{\R^d}|b_i^n|^2 dx=1.
\end{equation*}
Thus,
\begin{equation*}
    \sum_{ij=1}^{d}\int_{\R^d}|\SX_i b_j^n+\SX_j b_i^n|^2
    dx\leq \frac{1}{n}.
\end{equation*}

\medskip

\noindent{\it Step 2.} We claim that there is a constant $C$
independent of $n\geq 1$ and a continuous function $w(x)$ only
depending on $V$ with
\begin{equation*}
    \inf_{x\in \R^d} w(x)\geq 1\ \text{and}\ \ w(x)\to \infty \ \
    \text{as}\ |x|\to \infty
\end{equation*}
such that
\begin{equation*}
   \sup_{n\geq 1} \int_{\R^d}|\na b^n|^2+|b^n|^2w(x)dx \leq C.
\end{equation*}
In fact, one can compute
\begin{eqnarray}
   \sum_{ij=1}^{d}\int_{\R^d}|\SX_i b_j^n+\SX_j b_i^n|^2dx &=& 2\sum_{ij=1}^{d}\int_{\R^d }|\SX_i
   b_j^n|^2dx+2\sum_{ij=1}^{d}(\SX_i b_j^n,\SX_j b_i^n) \nonumber \\
   &=&: I_1+I_2.\label{thm.kn.p0}
\end{eqnarray}
For $I_1$, it holds that
\begin{eqnarray*}
  I_1 &=&2\sum_{ij=1}^{d}\int_{\R^d }|\SX_i
   b_j^n|^2dx\\
   &=&2\sum_{ij=1}^{d}\int_{\R^d }|\pa_i b^n_j|^2+|b_j^n|^2(\frac{1}{4}|\pa_i V|^2-\frac{1}{2}\pa_i\pa_i
   V)dx\\
   &=&2\int_{\R^d } |\na b^n|^2+|b^n|^2(\frac{1}{4}|\na V|^2-\frac{1}{2}\De
   V)dx.
\end{eqnarray*}
For $I_2$, from integration by parts, it holds that
\begin{eqnarray*}
 I_2&=& 2\sum_{ij=1}^{d}(\SX_i b_j^n,\SX_j b_i^n)
 = 2\sum_{ij=1}^{d}(\frac{1}{2}\pa_i V b^n_j+\pa_i b^n_j,\frac{1}{2}\pa_j V b^n_i+\pa_j
 b^n_i)\\
 &=&2\sum_{ij=1}^{d}\left[(\frac{1}{2}\pa_i Vb^n_j,\frac{1}{2}\pa_j V b^n_i)+
 (\pa_i b^n_j,\pa_j
 b^n_i)+(\frac{1}{2}\pa_i V b^n_j,\pa_j
 b^n_i)+(\pa_i b^n_j,\frac{1}{2}\pa_j V b^n_i)\right]\\
&=&2\sum_{ij=1}^{d}\left[(\frac{1}{2}\pa_i Vb^n_i,\frac{1}{2}\pa_j V
b^n_j)+
 (\pa_i b^n_i,\pa_j
 b^n_j)\right]\\
 &&+2\sum_{ij=1}^{d}\left[-(\frac{1}{2}\pa_j\pa_i V b^n_j,
 b^n_i)-(\frac{1}{2}\pa_i V \pa_jb^n_j,
 b^n_i)-(\frac{1}{2}\pa_i \pa_j V b^n_i,b^n_j)-(\frac{1}{2} \pa_j V\pa_i
 b^n_i,b^n_j)\right]
\end{eqnarray*}
which further can be written as
\begin{eqnarray*}
 I_2
 &=&2\left[(\frac{1}{2}\na V \cdot b^n,\frac{1}{2}\na V \cdot b^n)+(\na\cdot b^n,\na\cdot b^n)
 -(\na V \cdot b^n,\na\cdot b^n)\right]\\
 &&-2(b^n,\na^2 V b^n)\\
 &=&2\|\frac{1}{2}\na V \cdot b^n-\na\cdot b^n\|^2-2(b^n,\na^2 V
 b^n).
\end{eqnarray*}
Thus, it follows
\begin{eqnarray*}
  I_1+I_2 &=&2\int_{\R^d } |\na b^n|^2+|b^n|^2(\frac{1}{4}|\na V|^2-\frac{1}{2}\De
   V)dx\\
   &&+\|\frac{1}{2}\na V \cdot b^n-\na\cdot b^n\|^2-2(b^n,\na^2 V
 b^n).
\end{eqnarray*}
Then, this implies
\begin{eqnarray}
  I_1+I_2
 &\geq & 2\int_{\R^d } |\na b^n|^2+|b^n|^2(\frac{1}{4}|\na V|^2-\frac{1}{2}\De
   V-\sum_{ij=1}^d |\pa_i\pa_j V|)dx \nonumber  \\
   &&+\|\frac{1}{2}\na V \cdot b^n-\na\cdot b^n\|^2.\label{thm.kn.p1}
\end{eqnarray}
Now, define
\begin{equation*}
    w(x)=\max\{\frac{1}{4}|\na V|^2-\frac{1}{2}\De
   V-\sum_{ij=1}^d |\pa_i\pa_j V|,1\}.
\end{equation*}
Due to \eqref{v-cond1}, $w(x)\to\infty$ as $|x|\to\infty$. Choosing
$R>0$ large enough, one has
\begin{equation*}
    w(x)=\frac{1}{4}|\na V|^2-\frac{1}{2}\De
   V-\sum_{ij=1}^d |\pa_i\pa_j V|
\end{equation*}
for $|x|\geq R$. Then
\begin{eqnarray*}
&&\int_{\R^d }|b^n|^2(\frac{1}{4}|\na V|^2-\frac{1}{2}\De
   V-\sum_{ij=1}^d |\pa_i\pa_j V|)dx=\int_{|x|\geq R} +\int_{|x|\leq R}\\
   &&=\int_{\R^d }|b^n|^2w(x)dx-\int_{|x|\leq R}|b^n|^2w(x)dx+\int_{|x|\leq
   R}\\
   & &\geq\int_{\R^d }|b^n|^2w(x)dx -C_{R,V},
\end{eqnarray*}
where
\begin{equation*}
C_{R,V}=\sup_{|x|\leq R}w(x)-\inf_{|x|\leq R}(\frac{1}{4}|\na
V|^2-\frac{1}{2}\De
   V-\sum_{ij=1}^d |\pa_i\pa_j V|).
\end{equation*}
Then, it follows that
\begin{eqnarray*}
 &\dis\sum_{ij=1}^{d}\int_{\R^d }|\SX_i b_j^n+\SX_j b_i^n|^2dx  = I_1+I_2\\
 &\dis \geq 2\int_{\R^d }|\na b^n|^2+|b^n|^2 w(x)dx+\|\frac{1}{2}\na V \cdot b^n-\na\cdot
 b^n\|^2-2C_{R,V}.
\end{eqnarray*}
That is,
\begin{eqnarray*}
 \int_{\R^d }|\na b^n|^2+|b^n|^2 w(x)dx &\leq & \frac{1}{2}\sum_{ij=1}^{d}\int_{\R^d }|\SX_i b_j^n+\SX_j b_i^n|^2dx
 +C_{R,V}\\
 &\leq & \frac{1}{2n} +C_{R,V}\leq  \frac{1}{2} +C_{R,V}.
\end{eqnarray*}
Hence, the claim follows.

\medskip

\noindent{\it Step 3.} From the Rellich-Kondrachov compactness
theorem, up to a subsequence, $(b^n)_{n \geq 1}$ strongly converges
to some function $b\in (L^2(\R^d ))^d$, i.e.
\begin{equation*}
    \|b^n-b\|\to 0 \ \text{as}\ n\to\infty.
\end{equation*}
Thus, $b$ satisfies
\begin{eqnarray}
&\dis \|b\|=1,\label{lire0}
\end{eqnarray}
and
\begin{eqnarray}
&\dis X_ib_j+X_jb_i=0,\ \ 1\leq i,j\leq d,\label{lire1}\\
&\dis \int_{\R^d } e^{-\frac{V(x)}{2}} b(x) dx=0,\label{lire2}\\
&\dis \int_{\R^d } e^{-\frac{V(x)}{2}}(x_i b_j-x_jb_i) dx=0,\ \
1\leq i\neq j \leq d.\label{lire3}
\end{eqnarray}

\medskip

\noindent{\it Step 4.} We claim that \eqref{lire1}, \eqref{lire2}
and \eqref{lire3} imply $b=0$, which is a contradiction to
\eqref{lire0}. In fact, set
\begin{equation*}
    b_i(x)=e^{-\frac{V}{2}} \tilde{b}_i(x),\ \ i=1,2,\cdots,d.
\end{equation*}
From $X_ib_i=0$,
\begin{equation*}
    \pa_i \tilde{b}_i=0.
\end{equation*}
Then, $ \tilde{b}_i(x)$ is independent of $x_i$. Thus, set
$\tilde{b}_i=\tilde{b}_i(\tilde{x}_i)$, where $\tilde{x}_i$ is a
variable $(x_1,x_2,\cdots,x_d)$ excluding $x_i$.  From \eqref{lire1}
for $i\neq j$, one has
\begin{equation}\label{lire4}
    \pa_i \tilde{b}_j(\tilde{x}_j)+\pa_j \tilde{b}_i(\tilde{x}_i)=0,
    \ \ i\neq j.
\end{equation}
This implies that $\tilde{b}_i(\tilde{x}_i)$ is linear in each $x_j$
$(j\neq i)$. Thus,
\begin{equation*}
\tilde{b}_i(\tilde{x}_i)=\sum_{\{\al:\, \al_i=0,\al_j=0\,
\text{or}\, 1, j\neq i\}} C_{\al}^i x^\al.
\end{equation*}
It suffices to prove all coefficients $ C_{\al}^i$ vanish. In what
follows, fix $i\in\{1,\cdots,d\}$.

\medskip

\noindent{\it Case 1.} $|\al|=0$. Due to \eqref{lire2},
\begin{equation*}
    C_\al^i=0.
\end{equation*}

\medskip

\noindent{\it Case 2.} $2\leq |\al|\leq d-1$. Let $\ell=|\al|$ and
take
\begin{equation*}
    \{i_1,i_2,\cdots,i_{\ell}\}\subset \{1,2,\cdots,d\}.
\end{equation*}
Consider the coefficient of the monomial
\begin{equation*}
    \Pi_{j=1}^kx_{i_j}.
\end{equation*}
It only appears in $\tilde{b}_j$ with $j\in
\{1,2,\cdots,d\}\backslash \{i_1,i_2,\cdots,i_{\ell}\}$. Take
\begin{equation*}
    i_{\ell+1}\in \{1,2,\cdots,d\}\backslash
    \{i_1,i_2,\cdots,i_{\ell}\}.
\end{equation*}
Consider the following three functions
\begin{eqnarray*}
  \tilde{b}_{i_{\ell+1}} &=& \cdots+ C_1 x_{i_1}x_{i_2}\cdots
  x_{i_{\ell-1}}x_{i_{\ell}}+\cdots,\\
 \tilde{b}_{i_{\ell}} &=& \cdots+ C_2 x_{i_1}x_{i_2}\cdots
  x_{i_{\ell-1}}x_{i_{\ell+1}}+\cdots,\\
   \tilde{b}_{i_{\ell-1}} &=& \cdots+ C_3 x_{i_1}x_{i_2}\cdots
  x_{i_{\ell-2}}x_{i_{\ell}}x_{i_{\ell+1}}+\cdots,
\end{eqnarray*}
where $C_1,C_2,C_3$ are the corresponding constant coefficients. Due
to \eqref{lire4},
\begin{equation*}
    C_1+C_2=0,\ \ C_2+C_3=0,\ \ C_3+C_1=0.
\end{equation*}
Thus, $C_1=C_2=C_3=0$. The coefficient of $ \Pi_{j=1}^kx_{i_j}$ is
zero.  This proves
\begin{equation*}
    C_\al^i=0
\end{equation*}
when $2\leq |\al|\leq d-1$.

\medskip

\noindent{\it Case 3.} $|\al|=1$. Take $j\neq i$. Consider the
function
\begin{equation*}
    \tilde{b}_j=\cdots+C_i x_i+\cdots,
\end{equation*}
where $C_i$ is a constant. Due to  \eqref{lire4}, $\tilde{b}_i$ is
in the form of
\begin{equation*}
\tilde{b}_i=\cdots-C_i x_j+\cdots.
\end{equation*}
By using \eqref{lire3}, one has
\begin{equation*}
    0=\int_{\R^d} e^{-V}(x_i\tilde{b}_j-x_j \tilde{b}_i)dx=C_i\int_{\R^d}
    e^{-V}(x_i^2+x_j^2)dx.
\end{equation*}
Then, $C_i=0$. Thus,
\begin{equation*}
    C_{\al}^i=0
\end{equation*}
when $|\al|=1$. This also completes the proof of Theorem
\ref{thm.kn}.
\end{proof}

\begin{corollary}\label{cor.kn}
Let $d\geq 2$. Suppose  \eqref{iden1} and \eqref{v-cond1} hold.
Further assume that there is a constant $C$ such that
\begin{equation}\label{cor.kn.1}
    |\na^2 V|^2\leq C(|\na V|^2+1)
\end{equation}
for all $x\in \R^d$. Then, there is a constant $\la>0$ such that
\begin{equation}\label{cor.kn.2}
    \sum_{ij=1}^{d}\int_{\R^d}|\SX_i b_j+\SX_j b_i|^2
    dx\geq \la
    \sum_{ij=1}^{d} \int_{\R^d}|\SX_ib_j|^2 dx
\end{equation}
holds for any $b=(b_1,b_2,\cdots,b_d):\R^d\to \R^d\in (L^2(\R^d))^d$
satisfying
\begin{equation*}
\int_{\R^d} e^{-\frac{V(x)}{2}} b(x) dx=0
\end{equation*}
and
\begin{equation*}
 \int_{\R^d} e^{-\frac{V(x)}{2}}(x_ib_j-x_jb_i)dx=0,\ \ 1\leq i
 \neq j \leq d.
\end{equation*}

\end{corollary}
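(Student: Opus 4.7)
\textbf{Proof proposal for Corollary \ref{cor.kn}.} My plan is to derive \eqref{cor.kn.2} as a strengthening of Theorem \ref{thm.kn} by reusing the integration-by-parts identity already established inside Step 2 of the proof of that theorem, and then absorbing the extra Hessian term with the aid of the new assumption \eqref{cor.kn.1}.

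First, I record the identity that comes out of the computation of $I_1+I_2$ in Step 2 of Theorem \ref{thm.kn}, namely
\begin{equation*}
\sum_{ij=1}^d \int_{\R^d}|\SX_i b_j+\SX_j b_i|^2\,dx = 2\sum_{ij=1}^d\int_{\R^d}|\SX_i b_j|^2\,dx+\Bigl\|\tfrac12\na V\cdot b-\na\cdot b\Bigr\|^2-2(b,\na^2 V\,b).
\end{equation*}
Since $\|\tfrac12\na V\cdot b-\na\cdot b\|^2\ge 0$, this rearranges to
\begin{equation*}
\sum_{ij=1}^d\int_{\R^d}|\SX_i b_j|^2\,dx \le \tfrac12\sum_{ij=1}^d\int_{\R^d}|\SX_i b_j+\SX_j b_i|^2\,dx+(b,\na^2V\,b).
\end{equation*}
So the whole issue is to control the Hessian term $(b,\na^2 V\,b)$ by a small fraction of $\sum\int |\SX_i b_j|^2\,dx$ plus a multiple of $\sum\int |b_i|^2\,dx$.

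To do this I use \eqref{cor.kn.1}. By Cauchy--Schwarz and Young's inequality,
\begin{equation*}
|(b,\na^2 V\,b)|\le \eps\!\int|\na^2 V|^2|b|^2dx+\tfrac{1}{4\eps}\!\int|b|^2dx\le C\eps\!\int(|\na V|^2+1)|b|^2dx+\tfrac{1}{4\eps}\!\int|b|^2dx
\end{equation*}
for arbitrary $\eps>0$. The task then reduces to bounding $\int|\na V|^2|b|^2\,dx$ by $\sum\int|\SX_i b_j|^2\,dx$ up to a term in $\int|b|^2dx$. But the pointwise identity $\sum_{ij}|\SX_i b_j|^2=|\na b|^2+(\tfrac14|\na V|^2-\tfrac12\De V)|b|^2$ (obtained as in the start of Step 2 of Theorem \ref{thm.kn}) gives, after integration and using $|\De V|\le \sqrt{d}\,|\na^2V|\le C'(|\na V|+1)$ together with Young again,
\begin{equation*}
\int|\na V|^2|b|^2dx\le C_1\sum_{ij=1}^d\int|\SX_i b_j|^2dx+C_2\int|b|^2dx,
\end{equation*}
where the cross term $\int|\De V||b|^2dx$ is handled by splitting $|\De V|^2\le d|\na^2V|^2\le dC(|\na V|^2+1)$ and choosing the free parameter small enough to absorb $\int|\na V|^2|b|^2\,dx$ on the right.

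Combining the three displays and choosing $\eps>0$ sufficiently small gives
\begin{equation*}
\sum_{ij=1}^d\int|\SX_i b_j|^2dx\le C_3\sum_{ij=1}^d\int|\SX_i b_j+\SX_j b_i|^2dx+C_4\int|b|^2dx.
\end{equation*}
Finally, Theorem \ref{thm.kn} is directly applicable under the zero-mean and zero-angular-moment hypotheses on $b$, so $\int|b|^2dx\le C_5\sum_{ij}\int|\SX_i b_j+\SX_j b_i|^2dx$, and inserting this into the last inequality yields \eqref{cor.kn.2}. The only delicate point in the plan is the bootstrap that controls $\int|\na V|^2|b|^2\,dx$ via the $\SX$-energy while the raw bound $\int(\tfrac14|\na V|^2-\tfrac12\De V)|b|^2dx\le\sum\int|\SX_i b_j|^2dx$ is not sign-definite in $|\na V|^2$; this is resolved by iterating Young's inequality with \eqref{cor.kn.1} to absorb the $\De V$ contribution.
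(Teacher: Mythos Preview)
Your proposal is correct and follows essentially the same route as the paper: both arguments start from the $I_1+I_2$ identity in Step~2 of Theorem~\ref{thm.kn}, use assumption \eqref{cor.kn.1} together with Young's inequality to absorb the Hessian contribution into a small multiple of $\sum_{ij}\int|\SX_i b_j|^2\,dx$ plus $C\int|b|^2\,dx$, and then invoke Theorem~\ref{thm.kn} to eliminate the residual $\|b\|^2$ term. The only cosmetic difference is that the paper works with the pointwise quantity $\tfrac14|\na V|^2-\tfrac12\De V-\sum_{ij}|\pa_i\pa_j V|$ and compares it to $\tfrac14|\na V|^2-\tfrac12\De V$, whereas you keep $(b,\na^2Vb)$ as an integral and bound it directly; the underlying estimates are the same.
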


\begin{proof}
Recall from \eqref{thm.kn.p0} and \eqref{thm.kn.p1} that one has the
inequality
\begin{eqnarray}
 &\dis \sum_{ij=1}^{d}\int_{\R^d}|\SX_i b_j+\SX_j b_i|^2dx \nonumber \\
 &\dis \geq
 2\int_{\R^d}|\na b|^2 +|b|^2 (\frac{1}{4}|\na V|^2-\frac{1}{2}\De V-\sum_{ij=1}^d|\pa_i\pa_j
 V|) dx.\label{cor.kn.p1}
\end{eqnarray}
From the assumption \eqref{cor.kn.1},
\begin{eqnarray*}
  |\na V|^2 &=& 4 (\frac{1}{4}|\na V|^2-\frac{1}{2}\De V)+2 \De V\\
  &\geq & 4 (\frac{1}{4}|\na V|^2-\frac{1}{2}\De V) -\eps |\na^2
  V|^2-C_\eps\\
&\geq & 4 (\frac{1}{4}|\na V|^2-\frac{1}{2}\De V) -C\eps |\na
  V|^2-C_\eps
\end{eqnarray*}
for any $\eps>0$. This implies
\begin{equation}\label{cor.kn.p2}
 |\na V|^2\geq \la (\frac{1}{4}|\na V|^2-\frac{1}{2}\De V)-C_\la
\end{equation}
for some constant $0< \la<4$. Similarly, it holds that
\begin{eqnarray}
\frac{1}{4}|\na V|^2-\frac{1}{2}\De V-\sum_{ij=1}^d|\pa_i\pa_j
 V| &\geq &\frac{1}{4}|\na V|^2-\eps |\na^2 V|^2-C_\eps \nonumber \\
 &\geq &(\frac{1}{4}-C\eps)|\na V|^2-C_\eps\label{cor.kn.p3}
\end{eqnarray}
for any $\eps>0$. Then, combining \eqref{cor.kn.p2} and
\eqref{cor.kn.p3},  it follows
\begin{equation*}
\frac{1}{4}|\na V|^2-\frac{1}{2}\De V-\sum_{ij=1}^d|\pa_i\pa_j
 V| \geq \la (\frac{1}{4}|\na V|^2-\frac{1}{2}\De V)-C_\la
\end{equation*}
for some constant $0<\la<1$. Plugging the above inequality into
\eqref{cor.kn.p1} gives
\begin{eqnarray*}
 &\dis \sum_{ij=1}^{d}\int_{\R^d}|\SX_i b_j+\SX_j b_i|^2dx \\
 &\dis \geq
 \la \int_{\R^d}|\na b|^2 +|b|^2 (\frac{1}{4}|\na V|^2-\frac{1}{2}\De V) dx-C_\la \int_{\R^d}|b|^2dx.
\end{eqnarray*}
From the Korn-type inequality \eqref{thm.kn.1}, it further follows
\begin{eqnarray*}
 \sum_{ij=1}^{d}\int_{\R^d}|\SX_i b_j+\SX_j b_i|^2dx
 &\geq&
 \la \int_{\R^d}|\na b|^2 +|b|^2 (\frac{1}{4}|\na V|^2-\frac{1}{2}\De V)
 dx\\
 &=&\la  \sum_{ij=1}^{d} \int_{\R^d}|\SX_ib_j|^2 dx.
\end{eqnarray*}
This proves \eqref{cor.kn.2} and thus completes the proof of
Corollary \ref{cor.kn}.
\end{proof}

We conclude this paper with a remark that Proposition \ref{prop.pn},
Theorem \ref{thm.kn} and Corollary \ref{cor.kn} can be described in
terms of the probability measure $d\mu=e^{-V(x)}dx$ as in \cite{Vi},
where $V$ satisfies the corresponding conditions. In fact,
equivalently, \eqref{prop.pn.1} become
\begin{equation*}
    \int_{\R^d}|\na a|^2d\mu \geq \la \int_{\R^d}|a|^2 d\mu
\end{equation*}
for $a$ with $\int_{\R^d} a d\mu =0$. Similarly, \eqref{thm.kn.1}
and \eqref{cor.kn.2} are respectively equivalent with
\begin{equation*}
    \sum_{ij=1}^d\int_{\R^d} |\pa_ib_j+\pa_j b_i|^2d\mu\geq \la
    \sum_{i=1}^d  \int_{\R^d} |b_i|^2d\mu
\end{equation*}
and
\begin{equation*}
    \sum_{ij=1}^d\int_{\R^d} |\pa_ib_j+\pa_j b_i|^2d\mu\geq \la
    \sum_{ij=1}^d  \int_{\R^d} |\pa_ib_j|^2d\mu
\end{equation*}
for $b=(b_1,b_2,\cdots,b_d)$ with
\begin{equation*}
    \int_{\R^d} b \,d\mu=0 \ \ \text{and}\ \  \int_{\R^d} x\times
    b\,
    d\mu=0.
\end{equation*}

\vspace{1cm}

\noindent {\bf Acknowledgments:}\,\, The author thanks  Jean
Dolbeault and Cl\'{e}ment Mouhot for their helpful discussions and
suggestions for the development of this work. The author would like
to thank Peter Markowich and Massimo Fornasier for their strong
support during the study of this work at RICAM. Warm thanks go to
Seiji Ukai, Tong Yang, Huijiang Zhao and Changjiang Zhu for their
continuous encouragement.

\addcontentsline{toc}{section}{References}

\end{document}